\font\Bbb=msbm10 at 10 truept
\def\n{\hbox{\Bbb N}}
\DeclareMathOperator{\Reg}{Reg}
\DeclareMathOperator{\Base}{Base}
\newcommand{\qed}{
  \ifmmode
   \eqno{\qedsymbol}
  \else
    \leavevmode\unskip\penalty9999 \hbox{}\nobreak\hfill\hbox{\qedsymbol}
  \fi
}
\newcommand{\qedsymbol}{\leavevmode\vrule height 1.2ex width 1.1ex depth -.1ex}
\newenvironment{proof}{\begin{trivlist}\item[\hskip
\labelsep{\bf Proof.\quad}]}
{\hfill\qed\rm\end{trivlist}}
\newtheorem{theorem}{Theorem}[section]
\newtheorem{corollary}[theorem]{Corollary}
\newtheorem{proposition}[theorem]{Proposition}
\newtheorem{lemma}[theorem]{Lemma}
\title{Semilattices of Stratified Extensions}
\author{James Renshaw\\
\small School of Mathematical Sciences\\
\small University of Southampton\\
\small Southampton, SO17 1BJ, England\\
\small ORCID: 0000-0002-5571-8007\\
\small  j.h.renshaw@soton.ac.uk\\\\
William Warhurst\\
\small School of Mathematical Sciences\\
\small University of Southampton\\
\small Southampton, SO17 1BJ, England\\
\small  w.warhurst@soton.ac.uk\\
}
\begin{document}
\date{May 2023}
\maketitle
\begin{abstract}
\noindent  In 1995 Grillet introduced the concept of a stratified semigroup as a kind of generalisation of finite nilsemigroups. We extend these ideas here by allowing a more general $\Base$ and describe them in terms of extensions of semigroups by stratified semigroups. We consider semillatices of certain types of group-bound semigroups and also semillatices of Clifford semigroups and show how to describe them as semilattices of these stratified extensions and provide a number of interesting examples.
\\
\\
{\bf Keywords} Semigroup, stratified, extension, semilattice, group-bound, Clifford semigroup.\\
{\bf Mathematics Subject Classification} 2020: 20M10.
\end{abstract}

\section{Introduction and Preliminaries}

\smallskip

Grillet \cite{grillet-95} defines a semigroup $S$ with zero to be \textit{stratified} whenever $\bigcap_{m>0}S^m = \{0\}$.
A semigroup without zero is called \textit{stratified} if $S^0$ is stratified. He shows that this class of semigroups includes the class of all free semigroups, free commutative semigroups, homogeneous semigroups and nilpotent semigroups with finite index. Our aim is to generalise this concept and consider some semigroups that can be decomposed as semilattices of some of these more general kinds of stratified semigroups.

\medskip

After some basic definitions and preliminary results, in section 2, we introduce the concept of a {\em stratified extension} as a generalisation of Grillet's stratified semigroups, and we provide a number of interesting results on the overall structure of such semigroups. The final two sections then examine two families of semigroups that exhibit this stratified structure. In section 3 our focus is on semigroups in which every regular ${\cal H}-$class contains an idempotent. We show that group-bound semigroups with this property are semilattices of stratified extensions of completely simple semigroups and describe the semilattice structure. Finally in section 4 we look at strict extensions of Clifford semigroups and show amongst other things that strict stratified extensions of Clifford semigroups are semilattices of stratified extensions of groups. For all terminology in semigroups not otherwise defined see Howie (\cite{howie-95}).

\bigskip

Let $S$ and $T$ be semigroups, with $T$ containing a zero. A semigroup $\Sigma$ is called an \textit{ideal extension} of $S$ by $T$ if it contains $S$ as an ideal and the Rees quotient $\Sigma/S$ is isomorphic to $T$. Grillet and Petrich \cite{grillet-petrich-68} define an extension as \textit{strict} if every element of $\Sigma \setminus S$ has the same action on $S$ as some element of $S$ and \textit{pure} if no element of $\Sigma \setminus S$ does. They also showed that any extension of an arbitrary semigroup $S$ is a pure extension of a strict extension of $S$.

\begin{proposition}[{\cite[Proposition 2.4]{grillet-petrich-68}}]\label{grillet-petrich-proposition}
Every extension of $S$ is strict if and only if $S$ has an identity.
\end{proposition}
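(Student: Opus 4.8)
The plan is to prove both implications directly, treating the \emph{action} of an element $x\in\Sigma$ on the ideal $S$ as the pair of maps $s\mapsto xs$ and $s\mapsto sx$, and asking that two elements induce the same pair.

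For the sufficiency direction, suppose $S$ has an identity $e$. Given any ideal extension $\Sigma$ of $S$ and any $a\in\Sigma\setminus S$, the element I would use to witness strictness is $eae$, which lies in $S$ because $S$ is an ideal. The key computation is that for every $s\in S$ we have $(eae)s=as$ and $s(eae)=sa$; each follows from associativity together with the two facts that $e$ acts as the identity on $S$ and that products such as $as$ and $sa$ already lie in $S$, so that a leading or trailing $e$ can be absorbed. This shows that $eae$ has the same action on $S$ as $a$, so the extension is strict. The subtle point here is the choice of witness: the one-sided products $ae$ or $ea$ only reproduce one of the two translations, and it is the symmetric product $eae$ that reproduces both simultaneously.

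For necessity I would argue by contraposition, so assume $S$ has no identity and exhibit an extension that fails to be strict. The natural candidate is $\Sigma=S^1$, obtained by adjoining a new identity $1$; here $S$ is an ideal and the Rees quotient $S^1/S$ is the two-element monoid $\{1,0\}$, which contains a zero as the definition requires. Now $1\in\Sigma\setminus S$ acts on $S$ as the identity map on both sides. If the extension were strict there would be some $s_0\in S$ with $s_0 s=s=s s_0$ for all $s\in S$, that is, an identity for $S$, contradicting our assumption. Hence $S^1$ is not strict, which completes the contrapositive.

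I expect the main obstacle to be the sufficiency direction, specifically identifying the correct element of $S$ to witness strictness; once the symmetric product $eae$ is found, the verifications are routine associativity manipulations that use only the absorption property of $e$ and the fact that $S$ is an ideal. The necessity direction is essentially immediate given the standard construction of $S^1$.
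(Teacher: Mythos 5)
Your proof is correct. Note first that the paper itself offers no proof of this statement: it is quoted as Proposition 2.4 of Grillet--Petrich, so there is nothing internal to compare against; your argument is the standard one, and both directions are sound. The witness $eae$ does lie in $S$ (ideal property), the computation $(eae)s = as$, $s(eae)=sa$ is valid, and the contrapositive via $\Sigma = S^1$ is exactly the right counterexample, since strictness of $S^1$ would force some $s_0\in S$ to act as a two-sided identity on $S$.

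One small correction to your commentary, though it does not affect the validity of the proof: the claim that the one-sided products $ae$ or $ea$ "only reproduce one of the two translations" is false. Since $e$ is a two-sided identity for $S$ and $S$ is an ideal, one has $(ae)s = a(es) = as$ and $s(ae) = (sa)e = sa$, so $ae$ alone already witnesses strictness; the same holds for $ea$. Indeed all three candidates coincide: $eae = (ea)e = ea$ because $ea \in S$, and $eae = e(ae) = ae$ because $ae \in S$, so $ea = eae = ae$. So there is no subtlety in the choice of witness --- any of the three works --- and the real content of the sufficiency direction is just the absorption of $e$ together with the ideal property, exactly as you use it.
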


Let $S$ and $T$ be disjoint semigroups. A {\em partial homomorphism}~\cite{clifford-preston-61} from $T$ to $S$ is a map $f:T\setminus\{0\}\to S$ such that for all $x,y \in S, f(xy) = f(x)f(y)$ whenever $xy\ne0$.

We adopt the convention used by Clifford and Preston (\cite{clifford-preston-61}) that elements of $T \setminus \{0\}$ are denoted by capital letters and elements of $S$ by lowercase letters. A partial homomorphism from $T \setminus \{0\}$ to $S$ given by $A\mapsto \overline{A}$ defines an extension $\Sigma = S\bigcup T\setminus\{0\}$ with multiplication given by
\begin{enumerate}
\item $A\ast B =
\begin{cases}
AB & AB \neq 0\\
\overline{A}\;\overline{B} & AB = 0
\end{cases}$
\item $A\ast s = \overline{A}s$
\item $s\ast A = s\overline{A}$
\item $s\ast t = st$
\end{enumerate}
where $A,B \in T \setminus \{0\}$ and $s,t \in S$. From parts (2) and (3) above, all extensions defined in this way are strict.

\smallskip

Let $S$ be a semigroup and let $a,b\in S$. We say that $a$ and $b$ are {\em interchangeable} if
$$
\forall x\in S, ax=bx\text{ and } xa=xb.
$$
A semigroup is called {\em weakly reductive} if it contains no interchangeable elements. Notice that every monoid is weakly reductive.

\begin{theorem}[{\cite[Theorem 2.5]{grillet-petrich-68}}]\label{grillet-petrich-theorem}
Let $S$ be weakly reductive. Then every strict extension of $S$ is determined by a partial homomorphism, and conversely.
\end{theorem}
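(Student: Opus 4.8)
The plan is to prove the two directions separately, concentrating the real work on the forward implication. The converse — that a partial homomorphism $A\mapsto\overline{A}$ determines a strict extension through the multiplication in clauses (1)--(4) — is essentially the construction recalled just before the statement: one need only check associativity, a routine case analysis splitting on whether the relevant products land in $S$ or in $T\setminus\{0\}$, together with the observation following (3) that any such extension is automatically strict. Note that this direction does not require weak reductivity; it enters only in the forward direction, to which I now turn.

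So let $\Sigma$ be an arbitrary strict extension of $S$, and identify $\Sigma\setminus S$ with $T\setminus\{0\}$ via the isomorphism $\Sigma/S\cong T$. Under this identification, for $A,B\in T\setminus\{0\}$ the product $AB$ computed in $\Sigma$ lies in $\Sigma\setminus S$ and agrees with the product in $T$ exactly when $AB\ne0$ in $T$, and lies in $S$ exactly when $AB=0$. For each $A\in T\setminus\{0\}$, strictness furnishes an element of $S$ acting on $S$ precisely as $A$ does; weak reductivity guarantees this element is unique, since any two such would satisfy $\overline{A}s=As=\overline{A}'s$ and $s\overline{A}=sA=s\overline{A}'$ for all $s$ and hence be interchangeable. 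This yields a well-defined map $A\mapsto\overline{A}$ with $As=\overline{A}\,s$ and $sA=s\,\overline{A}$ for every $s\in S$.

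The central step is to show this map is a partial homomorphism, that is, $\overline{AB}=\overline{A}\,\overline{B}$ whenever $AB\ne0$. Here I would compute the two-sided action of $\overline{AB}$ on an arbitrary $s\in S$ using associativity in $\Sigma$: $\overline{AB}\,s=(AB)s=A(Bs)=A(\overline{B}s)=\overline{A}(\overline{B}s)=(\overline{A}\,\overline{B})s$, using strictness of $B$ (with $\overline{B}s\in S$) and then of $A$, and symmetrically $s\,\overline{AB}=s(\overline{A}\,\overline{B})$. Thus $\overline{AB}$ and $\overline{A}\,\overline{B}$ are two elements of $S$ with identical action, hence interchangeable, and weak reductivity forces $\overline{AB}=\overline{A}\,\overline{B}$.

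Finally I would verify that the multiplication of $\Sigma$ coincides with the $\ast$-multiplication determined by $A\mapsto\overline{A}$, checking (1)--(4) in turn. Clauses (2)--(4) are immediate from strictness and from $S$ being a subsemigroup, and clause (1) with $AB\ne0$ holds by the identification above. The only clause needing the hypothesis is (1) when $AB=0$: writing $c:=AB\in S$, the same associativity computation gives $cs=(\overline{A}\,\overline{B})s$ and $sc=s(\overline{A}\,\overline{B})$ for all $s$, so weak reductivity yields $c=\overline{A}\,\overline{B}$, matching the first line of (1). I expect the main difficulty to be organisational rather than deep: every nontrivial equality between elements of $S$ is obtained by passing from an equality of actions to an equality of elements, so the proof hinges on applying weak reductivity at exactly the three points above — to define $\overline{A}$, to make the map multiplicative, and to pin down the product when $AB=0$ — while keeping careful track of when a product in $\Sigma$ leaves $S$.
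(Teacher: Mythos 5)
Your proof is correct and complete: note that the paper itself offers no proof of this statement (it is imported directly from Grillet and Petrich, cited as Theorem 2.5 of their paper), and your argument --- defining $\overline{A}$ by strictness with uniqueness forced by weak reductivity, proving multiplicativity of $A\mapsto\overline{A}$ via associativity in $\Sigma$, and identifying the product $AB$ with $\overline{A}\,\overline{B}$ when $AB=0$ by comparing two-sided actions --- is precisely the classical argument of that cited source. Your accounting of exactly where weak reductivity enters (three places, all converting equality of actions into equality of elements) and your observation that the converse direction needs no such hypothesis are both accurate.
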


Recall that a semigroup is said to be {\em $E-$dense} (or {\em $E-$inversive}) if for all $s \in S$ there exists $t \in S$ such that $st\in E(S)$. The following is well-known

\begin{lemma}
The following are equivalent
\begin{enumerate}
\item $S$ is $E-$dense,
\item for all $s \in S$ there exists $t\in S$ such that $ts\in E(S)$,
\item for all $s \in S$ there exists $t\in S$ such that $st,ts\in E(S)$,
\item for all $s\in S$ there exists $s'\in S$ such that $s'ss' = s'$.
\end{enumerate}
\end{lemma}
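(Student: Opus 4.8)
The plan is to establish all four equivalences through a short cycle of implications, exploiting the evident left–right symmetry between (1) and (2). First I would dispose of the trivial implications: (3) $\Rightarrow$ (1) and (3) $\Rightarrow$ (2) hold at once, since the single element $t$ furnished by (3) already witnesses each one-sided condition. This reduces the task to linking (1), (2) and (4).

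Next I would prove (4) $\Rightarrow$ (3) by direct computation. Given $s$ and $s'$ with $s'ss' = s'$, I would check that \emph{both} $ss'$ and $s's$ are idempotent: indeed $(ss')^2 = s(s'ss') = ss'$ and $(s's)^2 = (s'ss')s = s's$, each consuming the defining relation once. Taking $t = s'$ then simultaneously yields $st = ss' \in E(S)$ and $ts = s's \in E(S)$, which is precisely (3).

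The substantive step is the converse passage from one-sided idempotency to condition (4). For (1) $\Rightarrow$ (4), given $s$ I would choose $t$ with $st \in E(S)$, so that $stst = st$, and set $s' = tst$. A short associative rearrangement then gives $s'ss' = t(stst)(st) = t(st)(st) = t(st) = tst = s'$, so $s'$ satisfies (4). The implication (2) $\Rightarrow$ (4) is the exact left–right dual: choosing $t$ with $ts \in E(S)$, whence $tsts = ts$, and again setting $s' = tst$, the analogous grouping $s'ss' = (tsts)(tst) = (ts)(tst) = (tsts)t = (ts)t = tst = s'$ does the job. Combining these, the chains (1) $\Rightarrow$ (4) $\Rightarrow$ (3) $\Rightarrow$ (1) and (2) $\Rightarrow$ (4) $\Rightarrow$ (3) $\Rightarrow$ (2) close the loop and deliver the equivalence of all four conditions.

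The only genuinely nonobvious ingredient, and hence the main obstacle, is guessing the correct witness $s' = tst$: the raw hypothesis hands one only a $t$ making $st$ (or $ts$) idempotent, and the trick is to \emph{symmetrise} it by multiplying on both sides so that the idempotency relation can be consumed in the middle of the product $s'ss'$. Once this candidate is identified, every verification reduces to a one-line manipulation using associativity together with a single application of the relevant idempotent identity.
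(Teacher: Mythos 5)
Your proof is correct: the trivial implications (3) $\Rightarrow$ (1), (2), the computation showing $ss'$ and $s's$ are idempotent for (4) $\Rightarrow$ (3), and the symmetrised witness $s' = tst$ for (1) $\Rightarrow$ (4) and (2) $\Rightarrow$ (4) all check out, and the implication cycle does yield the equivalence of all four conditions. The paper states this lemma as well-known and offers no proof of its own, so there is nothing to compare against; your argument is the standard one from the literature on $E$-inversive semigroups.
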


Such an element, $s'$, in (4) is called a {\em weak inverse} of $s$ and the set of all weak inverses of $s$ is denoted by $W(s)$. The set of all weak inverses of elements of $S$ is denoted by $W(S)$. Note that $W(S)=\Reg(S)$. It is easily shown that for all $s\in S, s'\in W(s)$, $ss', s's \in E(S)$ and $ss' \mathcal{L} s' \mathcal{R} s's$.

\begin{lemma} \label{E-dense-J-lemma}
Let $S$ be a semigroup.
\begin{enumerate}
\item If $s,t \in S$ then $W(st) \subseteq W(t)W(s)$.
\item If $S$ is an $E-$dense semigroup and $s' \in W(s)$ then $J_{s'} \leq J_s$.
\end{enumerate}
\end{lemma}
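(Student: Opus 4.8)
The plan is to prove the two parts separately, with part (1) resting on a single well-chosen factorisation and part (2) being almost immediate from the defining identity of a weak inverse.

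For part (1), I would begin with an arbitrary $w \in W(st)$, so that by definition $w(st)w = w$, i.e. $wstw = w$. The whole proof hinges on guessing the right factorisation of $w$ as a product of a weak inverse of $t$ with a weak inverse of $s$. The natural candidate, by analogy with the formula $(st)^{-1} = t^{-1}s^{-1}$ for genuine inverses, is $t' := ws$ and $s' := tw$, so that $t's' = ws\,tw = wstw = w$. I expect this guess to be the only genuinely creative step; everything afterwards is substitution, with the central block $wstw$ always collapsing to $w$.

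I would then verify the two membership claims directly. For $t' = ws$ we have $(ws)\,t\,(ws) = (wstw)s = ws$, so $ws \in W(t)$; for $s' = tw$ we have $(tw)\,s\,(tw) = t(wstw) = tw$, so $tw \in W(s)$. Combined with $t's' = w$, this shows $w = (ws)(tw) \in W(t)W(s)$, and since $w$ was arbitrary we obtain $W(st) \subseteq W(t)W(s)$. The main obstacle here is purely one of discovery: once the factorisation is written down the verification is routine.

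For part (2), I expect the statement to follow at once from the definition of a weak inverse, with $E$-density playing no role beyond guaranteeing that $W(s)$ is non-empty. Indeed $s' \in W(s)$ means $s' = s'ss'$, which exhibits $s'$ as an element of $SsS \subseteq S^1 s S^1$. Hence the principal ideal $S^1 s' S^1$ is contained in $S^1 s S^1$, which is precisely the assertion $J_{s'} \leq J_s$. I would remark that only this one inequality can be expected in general: since weak inverses are regular while $s$ itself need not be, there is no reason for the reverse comparison to hold, so no equality is claimed. The only mild point to record is that $E$-density is what makes the hypothesis $s' \in W(s)$ available for every $s$, ensuring the conclusion is not vacuous.
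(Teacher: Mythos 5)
Your proof is correct and takes essentially the same approach as the paper: in part (1) you use the identical factorisation $w = (ws)(tw)$ with $ws \in W(t)$ and $tw \in W(s)$, and in part (2) the identity $s' = s'ss'$ gives $s' \in S^1 s S^1$, hence $J_{s'} \leq J_s$ (the paper cites Howie for this last step, which you instead verify directly from the definition of the order on $\mathcal{J}$-classes). Your side remark that $E$-density plays no role in the argument beyond non-vacuity is also accurate.
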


\begin{proof}
These are fairly straightforward.
\begin{enumerate}
\item Let $(st)' \in W(st)$. Then $(st)'=(st)'st(st)'$ and so $t(st)'=t(st)'st(st)'$ and hence $t(st)' \in W(s)$. Similarly $(st)'s \in W(t)$. Then $(st)' = ((st)'s)(t(st)') \in W(t)W(s)$.
\item Since $s' \in W(s)$, $s'=s'ss'$ and so $J_{s'} = J_{s'ss'}$. By \cite[Equation 2.1.4]{howie-95} we have $J_{s'}=J_{s'ss'} \leq J_s$.
\end{enumerate}
\end{proof}

Note that it is well known that if $E(S)$ forms a band then $W(st)=W(t)W(s)$.

\medskip

An element $s$ in a semigroup $S$ is called {\em eventually regular} if there exists $n\ge 1$ such that $s^n$ is regular. A semigroup is {\em eventually regular} if all of its elements are eventually regular. It is clear that eventually regular semigroups are $E-$dense. A semigroup $S$ is called {\em group-bound} if for every $s \in S$, there exists $n\ge1$ such that $s^n$ lies in a subgroup of $S$. Clearly group-bound semigroups are eventually regular. If $S$ is eventually regular and each regular ${\cal H}-$class is a group then $S$ is group-bound.

\smallskip

A semigroup $S$ is called \textit{Archimedean} if for any $a, b \in S$ there exists $n \in \n$ such that $a^n \in SbS$.

\begin{theorem}[{\cite[Theorem 3]{shevrin-95}}]\label{shevrin-theorem-3}
Let $S$ be a group-bound semigroup. Then $S$ is a semillatice of Archimedean semigroups if and only if every regular ${\cal H}-$class of $S$ is a group.
\end{theorem}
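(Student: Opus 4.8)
The plan is to exhibit, under each hypothesis, the appropriate semilattice congruence and to identify its classes directly. For $a,b\in S$ write $a\rightarrow b$ if $a^n\in S^1bS^1$ for some $n\ge1$, and let $\eta=\rightarrow\cap\leftarrow$ be the associated Archimedean equivalence. Saying that $S$ is a semilattice of Archimedean semigroups is equivalent to saying that $\eta$ is a congruence whose quotient is a semilattice: for then each $\eta$-class is a subsemigroup in which any two elements divide a power of one another, i.e. is Archimedean, while conversely any semilattice-of-Archimedean decomposition refines to $\eta$. Since $(ab)^2=a(ba)b\in S^1(ba)S^1$ one always has $ab\,\eta\,ba$, so the only structural facts I need are that $a\,\eta\,a^2$ for all $a$ and that $\eta$ is transitive and two-sided compatible; granting these, $S/\eta$ is commutative and idempotent, hence a semilattice.

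Group-boundness enters in two places. First, it forces $a\,\eta\,a^2$: choosing $n$ with $a^n$ in a subgroup $G$ with identity $e$ and group inverse $(a^n)^{-1}$, we get $a^n=a^{2n}(a^n)^{-1}\in Sa^2S$, so $a\rightarrow a^2$, while $a^2\rightarrow a$ is trivial. Second, in a group-bound semigroup $\mathcal{J}=\mathcal{D}$, and the hypothesis \emph{every regular $\mathcal{H}$-class is a group} can be restated as \emph{every regular $\mathcal{J}$-class is a completely simple subsemigroup}, since a regular $\mathcal{J}$-class all of whose $\mathcal{H}$-classes are groups has completely simple principal factor. This reformulation is what couples the hypothesis to the divisibility preorder $\rightarrow$; the bicyclic monoid, which is simple but not group-bound and has non-group regular $\mathcal{H}$-classes, shows group-boundness cannot be dropped.

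For the forward implication, assume $S=\bigsqcup_{\alpha\in Y}S_\alpha$ is a semilattice $Y$ of Archimedean semigroups and let $H$ be a regular $\mathcal{H}$-class, say $a\in H$ with $a=aa'a$ and $a'=a'aa'$. A short component computation (using $\alpha\beta\alpha=\alpha\beta$) shows $a,a',aa',a'a$ all lie in one component $S_\alpha$, so $a$ is a regular element of the Archimedean semigroup $S_\alpha$. Because $S_\alpha$ is Archimedean, every idempotent $e\in S_\alpha$ satisfies $e=e^n\in S_\alpha xS_\alpha$ for every $x$, so all idempotents of $S_\alpha$ lie in its minimal ideal $K$, which by group-boundness is completely simple. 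Since $a\,\mathcal{R}\,aa'$ we have $a\in J_{aa'}=K$, and as every $\mathcal{H}$-class of the completely simple $K$ is a group containing its identity, and Green's $\mathcal{H}$ of a subsemigroup refines that of $S$, this idempotent lies in $H_a^{S}=H$. Hence $H$ is a group.

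The substantial direction is the converse, where I would show $\rightarrow$ is transitive and compatible. Compatibility is a routine manipulation, but transitivity is the crux: from $a^m\in S^1bS^1$ and $b^k\in S^1cS^1$ one must manufacture $a^N\in S^1cS^1$, and the obstacle is that a single factor $b$ appearing in $a^m=ubv$ must be promoted to a genuine power of $b$ before the second divisibility can be applied, something the separating factors $u,v$ obstruct in general. This is precisely where the reformulated hypothesis is used: a power of $a$ lands in a subgroup, hence in a regular and therefore completely simple $\mathcal{J}$-class, and the simplicity of that class together with group-boundness is what lets the relevant factor be absorbed and the two divisibilities chained. I expect this promotion step --- the transitivity of the Archimedean preorder under the completely simple hypothesis --- to be the main difficulty; once it is established, $\eta=\rightarrow\cap\leftarrow$ is a semilattice congruence with Archimedean classes, which completes the proof.
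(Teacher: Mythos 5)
Your forward implication (semilattice of Archimedean semigroups $\Rightarrow$ every regular $\mathcal{H}$-class is a group) is essentially sound: the component computation putting $a,a',aa',a'a$ in a single $S_\alpha$, the observation that an idempotent of an Archimedean semigroup lies in every ideal (so the kernel $K$ of $S_\alpha$ exists and contains all idempotents), and the step $a\mathrel{\mathcal{R}}aa'\Rightarrow a\in K$ are all correct. But be aware that it rests on two facts you assert without proof: that $S_\alpha$ (hence its ideal $K$) is itself group-bound, which needs the small check that a maximal subgroup of $S$ meeting $S_\alpha$ lies entirely inside $S_\alpha$; and that a simple group-bound semigroup is completely simple, which is Munn's theorem --- a genuine classical result that must be cited, not a consequence of the definitions. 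Incidentally, this is the one direction the paper itself never proves: it is exactly the implication deferred to Shevrin in Proposition~\ref{group-bound-proposition}.

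The genuine gap is the converse, and you flag it yourself. You reduce everything to the transitivity (and compatibility) of the preorder $\rightarrow$, identify the ``promotion'' obstacle, and then write ``once it is established''. That step is not a loose end: it is the entire content of the theorem in this direction, and nothing in your proposal proves it. The paper states the theorem as a citation, but its Section~3 does prove precisely this hard direction, and by a route that avoids your obstacle altogether. Rather than chaining divisibilities, it defines the candidate semilattice congruence directly in terms of weak inverses --- $s\,\rho\,t$ iff $W(s)$ and $W(t)$ meet the same $\mathcal{D}$-classes --- proves it is a semilattice congruence (Lemma~\ref{rho-mult-lemma}, Theorem~\ref{semilattice-theorem}), identifies its classes as the sets $K_{J_e}=\bigcup_{f\in E(J_e)}K_f$ (Lemma~\ref{J-lattice-lemma}, Theorem~\ref{KJe-semilattice-prop}), and then gets the Archimedean property inside each class for free from simplicity: if $a,b\in K_{J_e}$ then $a^m,b^n\in J_e$ for some $m,n$, and since $J_e$ is completely simple, $a^m\in J_e b^n J_e\subseteq K_{J_e}bK_{J_e}$. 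To close your gap you should do the same: abandon the attempt to prove transitivity of $\rightarrow$ abstractly, show instead that your $\eta$-classes are exactly the sets $K_{J_e}$, and verify the semilattice and Archimedean properties on that concrete description.
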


Let $S$ be a semigroup with $0$. We say that an element $x\in S$, is {\em nilpotent} if there is $n\in \n$ such that $x^n=0$. The semigroup $S$ is called a nilsemigroup if every element of $S$ is nilpotent. The semigroup $S$ is called {\em nilpotent with index} $n\in\n$ if $S^n=\{0\}$.

\section{Stratified Extensions}

Let $S$ be a semigroup (not necessarily stratified) and define the {\em base} of $S$ to be the subset $\Base(S) = \bigcap_{m>0} S^m$. We shall say that a semigroup $S$ is a \textit{stratified extension} of $\Base(S)$ if $\Base(S)\ne\emptyset$. The reason for this name will become apparent later. Clearly $\Base(S)$ is a subsemigroup of $S$. When $\Base(S)$ is a trivial subgroup then $S$ is a stratified semigroup. A stratified semigroup $S$ is not in general a stratified extension as we may have $\Base(S) = \emptyset$, however if $S$ is a stratified semigroup then $S^0$ is also stratified and is a stratified extension with trivial base. Further, $S$ is called a \textit{finitely stratified extension} if there exists $m \in \mathbb{N}$ such that $S^m = S^{m+1} = \Base(S)$. The smallest such $m$ is called the \textit{height} of $S$ and where necessary we shall refer to $S$ as a {\em finitely stratified extension with height $m$}. If for every $s$ in $S$ there is an $m \in \n$ such that $s^m \in \Base(S)$ then $S$ is a \textit{nil-stratified extension}. All finitely stratified extensions are nil-stratified extensions, but it is easy to demonstrate that not all nil-stratified extensions are finitely stratified extensions.

A finitely stratified extension is a stratified extension over the same base, since $S^m = S^{m+1}$ implies $S^n = S^m$ for all $n \geq m$ and so $\bigcap_{k>0} S^k = S^m$, where $m$ is the height of $S$. The converse is not true since, for example, if $S$ is a free semigroup with a zero adjoined, then $S$ is a stratified extension with trivial base but not a finitely stratified extension. It is clear that a (finitely) stratified extension has a unique base.

\medskip

Clearly, for all $m\ge1$, $S^{m+1} \subseteq S^{m}$ and so we define the \textit{layers} of $S$ as the sets $S_m = S^m \setminus S^{m+1}$, $m\ge 1$. Every element of $S \setminus \Base(S)$ lies in exactly one layer, and if $s \in S_m$ then $m$ is the \textit{depth} of $s$. The layer $S_1$ generates every element of $S \setminus \Base(S)$ and is contained in any generating set of $S$. However $\Base(S)\not\subseteq \langle S_1\rangle$ in general. For example, let $S$ be a semigroup with $0$, with no zero divisors. Then $0\in \Base(S)$ but $0 \not\in \langle S_1 \rangle$.

\medskip

Since $\Base(S) \subseteq S^m$ for any $m \in \mathbb{N}$, we have an alternative characterisation for the elements of $\Base(S)$.
Any $s \in S$ lies in $\Base(S)$ if and only if $s$ can be factored into a product of $m$ elements for any $m \in \mathbb{N}$, i.e. $s = a_1 a_2 \dots a_m$ for some $a_i \in S$. This characterisation gives us some immediate properties of $\Base(S)$ as a subsemigroup of $S$.

\begin{lemma}\label{lemma-2-1}
Let $S$ be a semigroup and let $s\in S$. If $s\in Ss\cup sS\cup SsS$ then $s \in \Base(S)$.
\end{lemma}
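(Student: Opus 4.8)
The plan is to treat the three cases $s\in Ss$, $s\in sS$, and $s\in SsS$ in turn, and in each case to use the defining relation repeatedly so as to exhibit $s$ as a product of arbitrarily many elements of $S$. I would then appeal to the characterisation recorded just before the lemma, namely that $s\in\Base(S)$ precisely when $s$ can be factored into a product of $m$ elements for every $m\in\n$; equivalently, I would show directly that $s\in S^m$ for all $m\ge1$. The nesting $S^{m+1}\subseteq S^m$ will be used to pass from ``$s$ lies in $S^k$ for arbitrarily large $k$'' to ``$s$ lies in every $S^m$''.

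First I would dispose of the case $s\in Ss$. Writing $s=as$ with $a\in S$ and substituting the relation into itself, an easy induction gives $s=a^n s$ for every $n\ge1$. Since $a^n s$ is a product of $n+1$ elements of $S$, this shows $s\in S^{n+1}$, and as $n$ ranges over all positive integers we obtain $s\in S^m$ for every $m\ge2$; together with $s\in S=S^1$ this yields $s\in\Base(S)$. The case $s\in sS$ is entirely symmetric, using $s=sb^n$ in place of $s=a^n s$.

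The remaining case $s\in SsS$ is only marginally more involved. Writing $s=asb$ with $a,b\in S$ and iterating gives $s=a^n s b^n$ for every $n\ge1$, which exhibits $s$ as a product of $2n+1$ elements, so $s\in S^{2n+1}$. Here the exponents that arise are only the odd integers, so the extra observation needed is the nesting of the powers: given any $m$, choose $n$ with $2n+1\ge m$, whence $s\in S^{2n+1}\subseteq S^m$. Hence $s\in S^m$ for all $m\ge1$ and again $s\in\Base(S)$.

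There is no substantial obstacle here; the content lies entirely in the simple idea that a relation of the form $s=\cdots s\cdots$ can be self-substituted indefinitely. The only point requiring a moment's care is the bookkeeping in the $SsS$ case, where the length of the product jumps by two at each step, so that one must invoke $S^{m+1}\subseteq S^m$ to cover all depths rather than merely the odd ones.
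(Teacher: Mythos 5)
Your proposal is correct and follows essentially the same route as the paper: iterate the relation to obtain $s=x^m s$, $s=sy^m$ or $s=x^m s y^m$, and then invoke the characterisation of $\Base(S)$ given just before the lemma. Your write-up merely spells out the induction and the use of the nesting $S^{m+1}\subseteq S^m$, which the paper leaves implicit.
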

\begin{proof}
It follows that for $m\ge1$, $s=x^{m}s$ or $s=sy^{m}$ or $s=x^{m}sy^{m}$ and so the result follows from the previous observation.
\end{proof}

\begin{corollary}
Suppose that $S$ is a semigroup.
\begin{enumerate}
\item Every monoid subsemigroup of $S$ is a submonoid of $\Base(S)$.
\item $\Reg(S)\subseteq \Base(S)$. Hence if $S$ is regular, $\Base(S)=S$.
\item $E(S)=E(\Base(S))$.
\item If $s \in S \setminus \Base(S)$ then $|J_s| = 1$, where $J_s$ is the ${\mathcal J}-$class of $s$.
\end{enumerate}
\end{corollary}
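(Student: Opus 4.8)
The plan is to derive all four parts from Lemma~\ref{lemma-2-1}, which already tells us that membership of $s$ in any of $Ss$, $sS$ or $SsS$ forces $s\in\Base(S)$. Parts (1)--(3) then become essentially one-line deductions, while part (4) needs a short argument combining the defining equivalence of the $\mathcal{J}$-relation with a careful treatment of the adjoined identity of $S^1$. I expect (4) to be the only place where any genuine work is required.

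For (1) I would take a monoid subsemigroup $M$ with identity $e$ and note that every $m\in M$ satisfies $m=em$, so $m\in Sm$ and hence $m\in\Base(S)$ by Lemma~\ref{lemma-2-1}; thus $M\subseteq\Base(S)$, and since $M$ is already a monoid it is a submonoid of $\Base(S)$. For (2), if $s\in\Reg(S)$ with $s=ss's$, then $s=(ss')s\in Ss$, so again $s\in\Base(S)$; this gives $\Reg(S)\subseteq\Base(S)$, and when $S$ is regular we obtain $S=\Reg(S)\subseteq\Base(S)\subseteq S$, forcing $\Base(S)=S$. For (3), every idempotent is regular (being its own inverse), so $E(S)\subseteq\Reg(S)\subseteq\Base(S)$ by (2); since idempotency is preserved under passage between $S$ and its subsemigroup $\Base(S)$, and trivially $E(\Base(S))\subseteq E(S)$, the two idempotent sets coincide.

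The substantive part is (4). I would first dispose of the degenerate case: if $S$ is a monoid then the argument of (1) gives $\Base(S)=S$, so $S\setminus\Base(S)$ is empty and there is nothing to prove. Hence I may assume $S$ has no identity, so that the adjoined $1$ of $S^1$ lies outside $S$ and a product of two elements of $S^1$ equals $1$ only when both factors are $1$. Now take $s\in S\setminus\Base(S)$ and any $t\in J_s$. Since $s\,\mathcal{J}\,t$ I may write $s=atb$ and $t=csd$ with $a,b,c,d\in S^1$, whence $s=(ac)\,s\,(db)$. If either $ac$ or $db$ lay in $S$, then $s$ would lie in one of $Ss$, $sS$, $SsS$, contradicting $s\notin\Base(S)$ via Lemma~\ref{lemma-2-1}; therefore $ac=db=1$. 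By the remark on $S^1$ this forces $a=b=1$, and so $s=atb=t$. Thus $J_s=\{s\}$ and $|J_s|=1$.

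The only delicate point, and the one I would watch most carefully, is the handling of $S^1$: the whole argument for (4) hinges on the implication $ac=1\Rightarrow a=1$, which is valid precisely because I have reduced to the case where $S$ is not a monoid. Making that reduction explicit is exactly what renders the case analysis on the pair $(ac,db)$ exhaustive and the final conclusion $s=t$ legitimate.
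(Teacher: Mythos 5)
Your proposal is correct and takes essentially the same approach as the paper: all four parts rest on Lemma~\ref{lemma-2-1}, with (1)--(3) as one-line consequences and (4) obtained by composing the two $\mathcal{J}$-factorisations $s=atb$, $t=csd$ into $s=(ac)s(db)$ and applying that lemma. The paper phrases (4) contrapositively (if $a\,\mathcal{J}\,b$ and $a\neq b$ then $a\in Sa\cup aS\cup SaS$, so $a\in\Base(S)$), while you argue directly and make the non-monoid reduction and the implication $ac=1\Rightarrow a=c=1$ explicit; this is a presentational difference only, and in fact your version spells out a case analysis the paper leaves tacit.
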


To see (4) notice that if $a \mathcal{J} b$ and $a \neq b$ then we have $a = ubv$ for some $u,v \in S^1$ and since $a \neq b$ we have $u$ and $v$ not both equal to $1$. Similarly $b = sat$ with $s,t\in S^1$ not both equal to $1$ and hence $a\in Sa\cup aS\cup SaS$.
The converse is not true, since for example in a semigroup with zero we have $J_0 = \{0\}$ but $0 \in \Base(S)$.

\smallskip

If follows immediately that the class of stratified extensions contains the class of semigroups with regular elements and hence in particular the classes of monoids, finite semigroups and regular semigroups. However, not every semigroup is a stratified extension. Consider for example a semigroup with a length function (i.e. a function $l:S\to\n$ such that for all $x,y \in S, l(xy) = l(x)+l(y)$). If $T$ is the subsemigroup of elements with non-zero length, then the elements of $T^m$ each have length at least $m$. Hence the elements of length exactly $m$ lie in $T_m \not\subseteq \Base(T)$ and so the base is empty. In particular, a free semigroup is not a stratified extension, nor is the semigroup of polynomials of degree $\ge 1$ over any ring, under multiplication.

\begin{proposition}\label{stratified-names-lemma}
Let $S$ be a stratified extension. Then
\begin{enumerate}
\item $\Base(S)$ is an ideal of $S$ and $S$ is an ideal extension of $\Base(S)$ by a stratified semigroup with $0$.
\item If $S$ is a nil-stratified extension then it is an ideal extension of $\Base(S)$ by a nilsemigroup.
\item If $S$ is a finitely stratified extension then it is an ideal extension of $\Base(S)$ by a nilpotent semigroup of finite index.
\end{enumerate}
\end{proposition}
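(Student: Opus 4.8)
The plan is to handle all three parts through a single construction: show that $\Base(S)$ is an ideal, form the Rees quotient $\pi\colon S\to T:=S/\Base(S)$, and then read off the structure of $T$ from the way powers of $S$ interact with $\Base(S)$. Once $\Base(S)$ is known to be an ideal, the statement that $S$ is an ideal extension of $\Base(S)$ by $T$ is automatic, so the content in each case is the identification of $T$ as stratified, nil, or nilpotent.

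First I would verify that $\Base(S)$ is an ideal. Using the factorisation characterisation noted above — that $b\in\Base(S)$ exactly when $b$ is a product of $m$ elements of $S$ for every $m$ — if $b\in\Base(S)$ and $s\in S$ then $sb$, $bs$ and any element of $SbS$ are again products of arbitrarily many elements of $S$ and so lie in $\Base(S)$; equivalently, from $b\in S^m$ we get $sb\in S^{m+1}\subseteq S^m$ for all $m$, whence $sb\in\Base(S)$, and symmetrically on the other side. Since $S$ is a stratified extension we have $\Base(S)\ne\emptyset$, so the Rees quotient $T$ exists and carries a zero, namely $0=\pi(\Base(S))$, and $S$ is an ideal extension of $\Base(S)$ by $T$.

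The key observation for part (1) is that, $\pi$ being a surjective homomorphism, $T^m=\pi(S)^m=\pi(S^m)$ for every $m\ge1$. Concretely $T^m=\{0\}\cup\{\pi(s):s\in S^m\setminus\Base(S)\}$, so for a nonzero element $\pi(s)$, with $s\notin\Base(S)$, we have $\pi(s)\in T^m$ precisely when $s\in S^m$ (using that $\pi$ is injective off $\Base(S)$). Hence $\pi(s)\in\bigcap_{m>0}T^m$ would force $s\in\bigcap_{m>0}S^m=\Base(S)$, contradicting $s\notin\Base(S)$. Therefore $\bigcap_{m>0}T^m=\{0\}$, so $T$ is a stratified semigroup with zero, which is (1).

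Parts (2) and (3) then follow by pushing the hypotheses through the same map. If $S$ is nil-stratified, then for each $s$ there is an $m$ with $s^m\in\Base(S)$, whence $\pi(s)^m=\pi(s^m)=0$; every element of $T$ is thus nilpotent and $T$ is a nilsemigroup. If $S$ is finitely stratified of height $m$, then $S^m=\Base(S)$, so $T^m=\pi(S^m)=\pi(\Base(S))=\{0\}$ and $T$ is nilpotent of finite index. The one step that deserves genuine care is the identity $T^m=\pi(S^m)$ together with the precise description of which classes survive in $T^m$; once that bookkeeping is settled the three conclusions are immediate, and I expect no real obstacle beyond making that correspondence explicit.
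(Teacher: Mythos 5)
Your proof is correct and follows essentially the same route as the paper: show $\Base(S)$ is an ideal via the factorisation characterisation, pass to the Rees quotient, and identify it as stratified, nil, or nilpotent of finite index respectively. The only difference is that you spell out the bookkeeping $T^m=\pi(S^m)$ and the injectivity of $\pi$ off the base to verify $\bigcap_{m>0}T^m=\{0\}$, a step the paper simply asserts.
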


\begin{proof}
\begin{enumerate}
\item For any $u, v \in S^1$, $t \in \Base(S)$ and $m > 3$, we have $t \in S^{m-2}$ so $utv \in S^m$ and hence $utv \in \Base(S)$. Hence we can regard $S$ as being an ideal extension of $\Base(S)$ by $S/\Base(S)$ and note that $S/\Base(S)$ is a stratified semigroup with $0$.

\item If $S$ is a nil-stratified extension then it follows that for every $s \in S$ there exists $m\in\n$ such that $s^m \in \Base(S)$. Hence in the Rees quotient $S/\Base(S)$, $s^m = 0$ and so $S/\Base(S)$ is a nilsemigroup and $S$ is an ideal extension by a stratified nilsemigroup.

\item Recall that the nilpotency index of a semigroup is the smallest value $m$ such that every product of $m$ elements is zero. It is easy to see that if the nilpotency index of $S/\Base(S)$ is $m$ then the height of $S$ is $m$ and so $S$ is a finitely stratified extension. Conversely, any nilpotent semigroup $S$ of finite nilpotency index $m$ is a stratified semigroup with $S^m = \{0\}$. We have hence proved the following.
\end{enumerate}
\end{proof}

The converses of these results do not hold. To see this, let $S$ be a free semigroup and $T$ be the two element nilsemigroup. Then $T$ is a stratified semigroup with $0$ but an extension of $S$ by $T$ is not a stratified extension. Further, $T$ is a nilpotent semigroup of finite index and an extension of $S^0$ by $T$ is a stratified extension, but is not a finitely stratified nor nil-stratified extension.

\begin{proposition}
Let $S$ be a stratified extension.
\begin{enumerate}
\item If $S$ is a nil-stratified extension then $\Base(S)$ is periodic if and only if $S$ is periodic;
\item If $S$ is a nil-stratified extension then $\Base(S)$ is eventually regular if and only if $S$ is eventually regular;
\item $\Base(S)$ is $E-$dense if and only if $S$ is $E-$dense.
\end{enumerate}
 So a stratified extension with a periodic base is $E-$dense.
\end{proposition}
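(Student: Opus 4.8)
The plan is to establish the three equivalences in turn and then read off the closing remark from (3). Throughout I would lean on two facts already available: that $\Base(S)$ is an ideal of $S$ (Proposition~\ref{stratified-names-lemma}(1)), and the observations in the corollary above that $\Reg(S)=W(S)\subseteq\Base(S)$ and $E(S)=E(\Base(S))$. In each of (1)--(3) one implication is essentially inherited by the subsemigroup $\Base(S)$, while the reverse implication carries the real content; the reverse directions of (1) and (2) are exactly where the nil-stratified hypothesis is used, whereas (3) needs no such hypothesis.

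For (1) the forward direction is immediate, since periodicity of an element depends only on the cyclic subsemigroup it generates and so passes to any subsemigroup. For the reverse direction I would take $s\in S$, use nil-stratifiedness to get $m$ with $s^m\in\Base(S)$, and then use periodicity of $\Base(S)$ to find $p,q\ge1$ with $(s^m)^p=(s^m)^{p+q}$, i.e. $s^{mp}=s^{mp+mq}$, whence $s$ has finite order. Part (2) runs in parallel. For its forward direction I would use that an ideal of an eventually regular semigroup is eventually regular: if $b\in\Base(S)$ and $b^n=b^n x b^n$ with $x\in S$, then $y=xb^n x\in\Base(S)$ by ideal absorption and $b^n y b^n=b^n$, so $b^n$ is regular in $\Base(S)$. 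For the reverse direction I would again pull a power into the base via nil-stratifiedness and use eventual regularity of $\Base(S)$ to regularise $s^{mk}$ inside $\Base(S)\subseteq S$.

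Part (3) is the crux. The forward direction is quick: given $b\in\Base(S)$ and a weak inverse $b'\in S$ (which exists since $S$ is $E$-dense), the corollary gives $b'\in W(S)=\Reg(S)\subseteq\Base(S)$, so the identity $b'bb'=b'$ already witnesses $E$-density of $\Base(S)$. The reverse direction is the step I expect to be the main obstacle, because an arbitrary $s\in S$ need not lie in $\Base(S)$ and, with no nil-stratified hypothesis, I cannot move a power of $s$ into the base. The trick I would use is to multiply into the ideal: fix any $b\in\Base(S)$ (nonempty by hypothesis), so $sb\in\Base(S)$; taking a weak inverse $(sb)'\in\Base(S)$ and setting $t=b(sb)'$, a direct check shows $st=(sb)(sb)'$ is idempotent, since $(sb)(sb)'(sb)(sb)'=(sb)\big[(sb)'(sb)(sb)'\big]=(sb)(sb)'$. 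Hence $st\in E(S)$ and $S$ is $E$-dense.

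Finally, for the closing remark I would note that a periodic semigroup is automatically $E$-dense: each $s$ generates a finite monogenic subsemigroup, which contains an idempotent $s^k$, so $s\cdot s^{k-1}=s^k\in E(S)$. Thus if $\Base(S)$ is periodic it is in particular $E$-dense, and by part (3) so is $S$.
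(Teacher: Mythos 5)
Your proposal is correct and follows essentially the same route as the paper: forward directions by inheritance, reverse directions of (1) and (2) by pushing a power of $s$ into the base via nil-stratifiedness, and (3) by multiplying into the ideal $\Base(S)$ and using $W(S)=\Reg(S)\subseteq\Base(S)$ together with $E(S)=E(\Base(S))$. If anything, you are more careful than the paper in the forward direction of (2): the paper asserts that eventual regularity passes to any subsemigroup (false in general --- the bicyclic monoid is regular yet contains a free monogenic subsemigroup), whereas your ideal-absorption step $y=xb^nx\in\Base(S)$ with $b^nyb^n=b^n$ supplies exactly the witness inside the base that this claim needs.
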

\begin{proof}
If $S$ is either periodic or eventually regular, then so is any subsemigroup and in particular $\Base(S)$. Conversely, if $\Base(S)$ is periodic (resp. eventually regular) and $s\in S$, then since $S$ is nil-stratified it follows that there exists $m\in\n$ such that $s^m\in\Base(S)$ and so there exists $n\in\n$ such that $s^{mn}$ is idempotent (resp. regular) and so $\Base(S)$ is periodic (resp. eventually regular).

For the third, let $\Base(S)$ be $E-$dense and let $s\in S$. Then for any $t\in \Base(S), ts\in \Base(S)$ and so there exists $u\in \Base(S)$ such that $uts\in E(\Base(S)) =E(S)$ and so $S$ is $E-$dense. Conversely suppose that $S$ is $E-$dense. Since $W(S) = \Reg(S)\subseteq \Base(S)$, then $\Base(S)$ is $E-$dense.
\end{proof}

Notice that periodic $\Rightarrow$ eventually regular $\Rightarrow$ $E-$dense $\Rightarrow$ stratified extension.

\medskip

There is in general little control over the base as a stratified extension can be constructed with any given semigroup as its base.

\begin{proposition}
Let $T$ and $R$ be any semigroups.Then there exists a stratified extension $S$ such that $T\subseteq\Base(S)$ and $S/T\cong R$. Moreover, if $R$ is stratified without a zero then $T=\Base(S)$.
\end{proposition}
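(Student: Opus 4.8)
The plan is to build $S$ explicitly as an ideal extension of $T$ in which the adjoined elements act trivially on the base, which is what forces $T$ into $\Base(S)$. Assuming without loss that $T\cap R=\emptyset$, and writing $R^0$ for $R$ with a zero adjoined when it has none (so that the Rees quotient can match $R$), I would take $S=T\cup R$ as a set and define a product $*$ that keeps the given multiplications inside $T$ and inside $R$ and lets every element of $R$ act as a two-sided identity on $T$: that is, $r*t=t$ and $t*r=t$ for all $r\in R$, $t\in T$. Checking the eight associativity cases for a triple product (each factor lying in $T$ or in $R$) is routine and uses only associativity in $T$, associativity in $R$, and the identity action. Since $r*t,\,t*r,\,t*t'$ all land in $T$, the subset $T$ is an ideal, and the Rees quotient $S/T$ collapses $T$ to a zero while leaving the multiplication of $R$ untouched, so $S/T\cong R^0$ (identified with $R$ as in the statement).

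The crux is that this trivial action places $T$ inside the base. Fix any $r\in R$ (a semigroup is nonempty) and any $t\in T$. Every power $r^m$ again lies in $R$, hence also acts as a left identity on $T$, so $t=r^m*t$; but the right-hand side is a product of $m+1$ elements of $S$, whence $t\in S^{m+1}$. As $m$ is arbitrary, $t\in\bigcap_{k\ge1}S^k=\Base(S)$. In particular $\Base(S)\ne\emptyset$, so $S$ is genuinely a stratified extension and $T\subseteq\Base(S)$, as required. I expect this to be the main obstacle: a naive extension, for instance one built from a partial homomorphism into $T$ as in Theorem \ref{grillet-petrich-theorem}, need not exist for arbitrary $T$ and, even when it does, need not put any of $T$ into the base (consider $T$ free-like, with $\Base(T)=\emptyset$). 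It is precisely the freedom to let $R$ act as an identity \emph{on} $T$, rather than through an element of $T$, that supplies the unboundedly long factorisations and repairs this.

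For the final clause, suppose $R$ is stratified without a zero, i.e. $R^0$ is a stratified semigroup, so $\Base(R^0)=\bigcap_m(R^0)^m=\{0\}$. Let $\pi:S\to S/T\cong R^0$ be the Rees homomorphism, which is surjective; then $\pi(S^m)=(\pi(S))^m=(R^0)^m$ for every $m$, and therefore $\pi(\Base(S))=\pi\bigl(\bigcap_m S^m\bigr)\subseteq\bigcap_m\pi(S^m)=\bigcap_m(R^0)^m=\{0\}$. Hence $\Base(S)\subseteq\pi^{-1}(0)=T$, and combining this with $T\subseteq\Base(S)$ from the previous step gives $\Base(S)=T$. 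This half is uniform: it uses nothing about the particular construction beyond $T\subseteq\Base(S)$ and $S/T\cong R^0$, so the genuine content of the proposition lies in the explicit extension of the first two paragraphs.
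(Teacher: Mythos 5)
Your proof is correct and is essentially the paper's own argument: the same construction $S = T\,\dot\cup\, R$ with elements of $R$ acting as a two-sided identity on $T$, and the same observation that arbitrarily long factorisations $t = r^m * t$ place $T$ in $\Base(S)$. The only (cosmetic) difference is in the final clause, where you push $\Base(S)$ through the Rees morphism onto $R^0$, while the paper computes $\bigcap_{m>0}(R\,\dot\cup\,T)^m = T$ directly from $\bigcap_{m>0}R^m = \emptyset$; both reduce to the same fact.
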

\begin{proof}
Let $S = R \dot\cup T$ and define a binary operation $\ast$ on $S$  by $r_1\ast r_2 = r_1r_2$ for $r_1,r_2 \in R$, $t_1\ast t_2 = t_1t_2$ for $t_1,t_2 \in T$, and $r\ast t=t\ast r=t$ for $r \in R$ and $t \in T$. It is easy to verify that this operation is associative and so $(S, \ast)$ is a semigroup. Then $T \subseteq \bigcap_{m>0} (R \dot\cup T)^m$ and so $S$ is a stratified extension. Moreover, if we choose $R$ such that $\bigcap_{m>0} R^m=\emptyset$, for example $R=A^+$, a free semigroup, we see that $\bigcap_{m>0} (R \dot\cup T)^m = T$ and so we can obtain a stratified extension with base $T$.
\end{proof}

In contrast, the possible bases for a finitely stratified extension are much more restricted. Let $S$ be a finitely stratified extension with $T=\Base(S)$ and consider $T^2$. There exists $m \in \mathbb{N}$ such that $T = S^m$, so $T^2 = S^{2m}$. But by definition $S^m = S^{m+1} = S^{m+2} = \dots = S^{2m}$ and so $T^2 = T$. A semigroup $T$ satisfying $T^2=T$ is said to be {\em globally idempotent} and so the base of a finitely stratified extension is globally idempotent. Note also that if $S$ is globally idempotent, then $S$ is a finitely stratified extension in a trivial sense, with base $S$ and height 1.

\begin{proposition}
A semigroup $\Sigma$ is a finitely stratified extension if and only if it is an ideal extension of a globally idempotent semigroup by a nilpotent semigroup of finite index.
\end{proposition}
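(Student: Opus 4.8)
The plan is to prove the two implications separately, noting that the forward direction can be assembled almost entirely from results already established, while the reverse direction carries the genuinely new content.

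For the forward implication, suppose $\Sigma$ is a finitely stratified extension, say of height $m$, so that $\Sigma^m = \Sigma^{m+1} = \Base(\Sigma)$. We have already observed in the discussion preceding the statement that the base of a finitely stratified extension is globally idempotent, so $\Base(\Sigma)$ is globally idempotent. By Proposition~\ref{stratified-names-lemma}(1), $\Base(\Sigma)$ is an ideal of $\Sigma$ and $\Sigma$ is an ideal extension of $\Base(\Sigma)$ by $\Sigma/\Base(\Sigma)$; and by part~(3) of the same proposition this quotient is a nilpotent semigroup of finite index. Assembling these facts gives one direction immediately.

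For the reverse implication, I would let $\Sigma$ be an ideal extension of a globally idempotent semigroup $T$ by a nilpotent semigroup $N$ of finite index, so that $T$ is an ideal of $\Sigma$ with $\Sigma/T \cong N$ and $N^n = \{0\}$ for some $n \in \n$. The crux is to show $\Sigma^n = \Sigma^{n+1} = \Base(\Sigma) = T$. First I would argue $\Sigma^n \subseteq T$: any product of $n$ elements of $\Sigma$ projects, under the quotient map onto $\Sigma/T \cong N$, to a product of $n$ elements of $N$, which is $0$ since $N^n = \{0\}$, and hence the product lies in $T$ (if any factor already lies in the ideal $T$, the product does too). Conversely, since $T$ is globally idempotent a trivial induction gives $T = T^k$ for all $k \geq 1$, and as $T \subseteq \Sigma$ this yields $T = T^k \subseteq \Sigma^k$ for every $k$; in particular $T \subseteq \Sigma^n$, so $\Sigma^n = T$. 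Then $\Sigma^{n+1} = \Sigma\Sigma^n = \Sigma T \subseteq T$ because $T$ is an ideal, while $T = T^{n+1} \subseteq \Sigma^{n+1}$, giving $\Sigma^{n+1} = T$. Finally, since $T \subseteq \Sigma^k$ for all $k$ and $\Sigma^n = T$, the intersection $\Base(\Sigma) = \bigcap_{k>0}\Sigma^k$ collapses to $T$, so $\Sigma^n = \Sigma^{n+1} = \Base(\Sigma)$, which is exactly the definition of a finitely stratified extension.

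The genuine care needed is confined to the step $\Sigma^n \subseteq T$: one must correctly read "nilpotent of index $n$" as $N^n = \{0\}$ (rather than merely every element being nilpotent, which would be too weak) and handle products in which some factors already lie in $T$, the latter being immediate from $T$ being an ideal. Everything else — the propagation $T^2 = T \Rightarrow T^k = T$ and the containment $T^k \subseteq \Sigma^k$ — is routine bookkeeping, and I anticipate no essential obstacle.
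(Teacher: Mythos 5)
Your proposal is correct and takes essentially the same route as the paper: the forward direction is assembled from Proposition~\ref{stratified-names-lemma} together with the observation preceding the statement that the base is globally idempotent, and the converse rests on the same two facts the paper uses, namely that nilpotency of the quotient of index $n$ gives $\Sigma^n \subseteq T$ while global idempotency gives $T = T^k \subseteq \Sigma^k$ for all $k$. The only cosmetic difference is that you derive $\Sigma^{n+1} = T$ directly from $T$ being an ideal, whereas the paper sandwiches $\Sigma^m = \Sigma^{2m}$ and appeals to the nesting of powers; the two are interchangeable.
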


\begin{proof}
We need only justify the converse. Let $\Sigma$ be an ideal extension of a globally idempotent semigroup $S$ by a nilpotent semigroup $T$ of finite index $m$. Then $\Sigma^m = S$ and $S=S^2$ so $\Sigma^m = \Sigma^{2m}$ and as each $\Sigma^i \subseteq \Sigma^{i+1}$ it follows that $\Sigma^m = \Sigma^{m+1}$. Hence $\Sigma$ is a finitely stratified extension with base $S$ and height $m$.
\end{proof}

This is still a very broad class of semigroup, including among its members every monoid and every regular semigroup. It should also be noted that a globally idempotent semigroup need not contain idempotents, the Baer-Levi semigroup being one such example.

\begin{proposition}
There exists a finitely stratified extension of height $h$, for any $h\in\n$.
\end{proposition}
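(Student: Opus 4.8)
The plan is to exhibit, for each $h\in\n$, an explicit semigroup that is a finitely stratified extension of height exactly $h$. The preceding characterisation tells us that a semigroup is a finitely stratified extension precisely when it is an ideal extension of a globally idempotent semigroup by a nilpotent semigroup of finite index, and recall from the proof of Proposition~\ref{stratified-names-lemma}(3) that the height of such an extension equals the nilpotency index of the quotient. Since the trivial semigroup $\{0\}$ is globally idempotent, any nilpotent semigroup $N$ of nilpotency index $m$ has $\Base(N)=\{0\}$ and is itself a finitely stratified extension of height $m$. Thus the whole problem reduces to producing, for each $h$, a nilpotent semigroup of index precisely $h$.

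First I would take the monogenic nilsemigroup $N_h=\{x,x^2,\dots,x^{h-1},0\}$ with multiplication $x^i x^j=x^{i+j}$ when $i+j\le h-1$ and $x^i x^j=0$ otherwise, the element $0$ being absorbing (so that $N_1=\{0\}$, recovering the trivial globally idempotent case of height $1$). Associativity is immediate from additive bookkeeping on the exponents together with the absorbing property of $0$, so $N_h$ is a semigroup.

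Next I would compute the powers. For $1\le k\le h-1$ the lowest-degree product of $k$ generators is $x^k$, and every exponent $e$ with $k\le e\le h-1$ is realisable as a sum of $k$ terms each at least $1$, so $N_h^k=\{x^k,x^{k+1},\dots,x^{h-1},0\}$; while for $k\ge h$ every product of $k$ elements has exponent at least $h$, so $N_h^k=\{0\}$. Consequently $\Base(N_h)=\bigcap_{m>0}N_h^m=\{0\}$ and $N_h^h=N_h^{h+1}=\Base(N_h)$, confirming that $N_h$ is a finitely stratified extension.

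Finally I would verify that the height is exactly $h$ rather than smaller: since $x^{h-1}\in N_h^{h-1}\setminus N_h^{h}$, each layer $(N_h)_k=N_h^k\setminus N_h^{k+1}$ is nonempty for $1\le k\le h-1$, so $N_h^{h-1}\ne N_h^{h}=\Base(N_h)$ and $h$ is the least index $m$ with $N_h^m=N_h^{m+1}=\Base(N_h)$. There is no genuine obstacle here; the only point needing care is checking that the chain of powers strictly decreases all the way down to the base, which is precisely what pins the height to $h$ and not to some smaller value.
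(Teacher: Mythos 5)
Your proof is correct and takes essentially the same approach as the paper: both exhibit a finite monogenic semigroup of index $h$ and verify, by counting exponents in products of $k$ factors, that the chain of powers strictly decreases until it stabilises at step $h$. Your nilsemigroup $N_h$ is exactly the paper's example (the monogenic semigroup of index $h$ and period $r$, whose base is the cyclic kernel group of order $r$) specialised to $r=1$, so that the base is trivial.
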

\begin{proof}
Let $S =\langle a\rangle$ be the monogenic semigroup of index $h$ and period $r$ and let $G=\{a^h,\ldots, a^{h+r-1}\}$ be the kernel of $S$, a finite cyclic group of order $r$. For any $x=a^{n_1}\ldots a^{n_h}\in S^h$ it follows that $x=a^{n_1+\ldots+n_h}$ and $n_1+\ldots+n_h\ge h$ so that $S^h\subseteq G$. However, since $a^l\in S^l$ then $S^l\not\subseteq G$ for any $l<h$. Since $S^{h+r-1}\subseteq S^h$, $S^{h+1}\subseteq S^h$ and $G\subseteq S^{h+r-1}$  then $S^h=S^{h+1}=G$ and $S$ is a finitely stratified extension with base $G$ and height $h$.
\end{proof}

\smallskip

\begin{theorem}
Let $S$ be a (finitely, nil-) stratified extension and let $S_i$ for $i \in I$ be a family of stratified extensions.
\begin{enumerate}
\item If $\rho$ is a congruence on $S$ then $S/\rho$ is a (finitely, nil-) stratified extension with base $\Base(S)/\rho$;
\item the direct product $\prod_{i \in I} S_i$ is a stratified extension with base $\prod_{i \in I} \Base(S_i)$;
\item if $|I|<\infty$ and each $S_i$ is a (finitely, nil-) stratified extension then the direct product $\prod_{i \in I} S_i$ is a (finitely, nil-) stratified extension with base $\prod_{i \in I} \Base(S_i)$.
\end{enumerate}
\end{theorem}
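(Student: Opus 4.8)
The plan is to reduce everything to two elementary facts describing how the power sets $S^m$ behave under a homomorphic image and under a direct product, and then to track the base $\bigcap_{m>0}S^m$ through each construction. For part (1) I would work with the natural map $\pi\colon S\to S/\rho$. Since $\pi$ is onto, $(S/\rho)^m=\pi(S^m)$ for every $m$, and from $\Base(S)\subseteq S^m$ for all $m$ we get at once $\pi(\Base(S))\subseteq\bigcap_{m>0}\pi(S^m)=\Base(S/\rho)$, so the base of $S/\rho$ is nonempty and $S/\rho$ is at least a stratified extension. If $S$ is nil-stratified, then for any $\pi(s)$ we have $s^k\in\Base(S)$ for some $k$, whence $\pi(s)^k=\pi(s^k)\in\pi(\Base(S))\subseteq\Base(S/\rho)$, so $S/\rho$ is again nil-stratified. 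The finitely stratified case is the cleanest: if $S$ has height $m$ then $S^m=S^{m+1}$, and applying $\pi$ gives $(S/\rho)^m=(S/\rho)^{m+1}$, so $S/\rho$ is finitely stratified with $\Base(S/\rho)=(S/\rho)^m=\pi(S^m)=\pi(\Base(S))$, which is exactly $\Base(S)/\rho$.

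For the direct product I would use the companion identity $\big(\prod_{i\in I}S_i\big)^m=\prod_{i\in I}S_i^m$, which is immediate by comparing coordinates (a product of $m$ tuples is the tuple of the $m$-fold coordinate products, and conversely). The base of the product is then $\bigcap_{m>0}\prod_{i\in I}S_i^m=\prod_{i\in I}\bigcap_{m>0}S_i^m=\prod_{i\in I}\Base(S_i)$, the interchange of the two intersections being valid coordinatewise for any index set; since each $\Base(S_i)$ is nonempty so is the product, which settles part (2). For part (3) finiteness of $I$ lets me take a maximum. In the finitely stratified case, if each $S_i$ has height $m_i$ then with $M=\max_i m_i$ we have $S_i^M=S_i^{M+1}=\Base(S_i)$ for every $i$, so $\big(\prod S_i\big)^M=\prod S_i^M=\prod S_i^{M+1}=\big(\prod S_i\big)^{M+1}$, a finitely stratified extension. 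In the nil case, for a tuple $(s_i)$ choose $k_i$ with $s_i^{k_i}\in\Base(S_i)$; since $\Base(S_i)$ is an ideal (Proposition~\ref{stratified-names-lemma}) we have $s_i^{k}\in\Base(S_i)$ for all $k\ge k_i$, so with $K=\max_i k_i$ the power $(s_i)^K$ lies in $\prod_i\Base(S_i)=\Base(\prod_i S_i)$.

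The one point needing genuine care, and which I expect to be the main obstacle, is the reverse base inclusion $\Base(S/\rho)\subseteq\pi(\Base(S))$ in part (1). For the finitely stratified case this is automatic from the stabilisation above, but in general $\bigcap_{m>0}\pi(S^m)$ need not coincide with $\pi\big(\bigcap_{m>0}S^m\big)$: an element of $S/\rho$ can meet $\pi(S^m)$ for every $m$ through a sequence of different $\rho$-representatives drawn from ever deeper layers, with no single representative lying in $\bigcap_{m>0}S^m$. The crux is therefore to produce, for a given $\bar s\in\Base(S/\rho)$, an actual representative in $\Base(S)$. I would attempt this by exploiting the ideal extension structure of $S$ over $\Base(S)$ from Proposition~\ref{stratified-names-lemma} together with the nil hypothesis to force the deep representatives into $\Base(S)$, and I would check the nil and the purely stratified cases separately against this interchange of intersection with image, since that is precisely the step where the argument is least formal.
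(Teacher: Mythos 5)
Your arguments for part (2), part (3), and the finitely stratified case of part (1) are correct, and they are essentially the paper's own proof: the paper rests on the same two identities $(S/\rho)^m = S^m/\rho$ and $\bigl(\prod_{i\in I} S_i\bigr)^m = \prod_{i\in I} S_i^m$ and merely asserts the conclusions, while you supply the details it omits (taking the maximum of the heights, and using the fact, established earlier in the paper, that the base of a stratified extension is an ideal so that all sufficiently high powers stay inside it).

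The step you single out as the crux of part (1) --- the inclusion $\Base(S/\rho)\subseteq \Base(S)/\rho$ in the stratified and nil-stratified cases --- is indeed the one gap in your proposal, but it is not a gap you can close: the inclusion is false in general, and the paper's own proof is wrong at exactly this point, because the ``Hence'' following $(S/\rho)^m=S^m/\rho$ tacitly commutes the intersection $\bigcap_{m>0}$ with the taking of images, which is precisely the interchange you declined to make. Concretely, let $S=\langle a\rangle\cup\{0\}$ be the free monogenic semigroup with a zero adjoined, so that $S^m=\{a^k:k\ge m\}\cup\{0\}$ and $S$ is a stratified extension with $\Base(S)=\{0\}$ (an example the paper itself uses in Section 2). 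Let $\rho$ be the kernel of the surjective homomorphism onto the two-element semilattice $\{e,z\}$ (with zero $z$) sending every $a^k$ to $e$ and $0$ to $z$. Then $S/\rho$ is a band, hence regular, so $\Base(S/\rho)=S/\rho$ has two elements, whereas $\Base(S)/\rho=\{z\}$: the $\rho$-class $\{a,a^2,a^3,\dots\}$ meets every $S^m$ through a different representative for each $m$, yet contains no element of $\Base(S)$ --- exactly the phenomenon you anticipated. The equality fails for nil-stratified extensions too: take $S$ to be the nilsemigroup of finite nonempty subsets of $\mathbb{N}$ with $A\cdot B=A\cup B$ if $A\cap B=\emptyset$ and $A\cdot B=0$ otherwise (so $A^2=0$ for every $A$ and $\Base(S)=\{0\}$), partition $\mathbb{N}$ into blocks $B_1,B_2,\dots$ with $|B_k|=k$, and let $\rho$ be the congruence generated by all pairs $(B_j,B_k)$; since distinct blocks are disjoint, no elementary $\rho$-transition leads out of the set $\{B_1,B_2,\dots\}$, so it is a single $\rho$-class, and it meets every $S^m$ (as $B_m\in S^m$) while avoiding $0$.

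What your argument actually establishes is the correct version of the statement: $S/\rho$ is a (finitely, nil-) stratified extension whose base \emph{contains} $\Base(S)/\rho$, with equality guaranteed when $S$ is finitely stratified. If you delete the claim of equality in the stratified and nil-stratified cases of (1), your proposal is a complete and correct proof of this amended theorem, and on that point it is more careful than the paper itself.
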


\begin{proof}
\begin{enumerate}
\item Let $S$ be a semigroup and let $\rho$ be a congruence on $S$. It is easy to see that for any $m \in \n$ we have $(S/\rho)^m = S^m/\rho$. Hence if $S$ is a stratified extension then $S/\rho$ is also a stratified extension, with $\Base(S/\rho) = \Base(S)/\rho$. Further, if $S$ is a finitely stratified or nil-stratified extension then so is $S/\rho$.

\item Let $S_i$ be a family of semigroups. Then $(\prod_{i \in I} S_i)^m = \prod_{i \in I} {S_i}^m$. Hence if each $S_i$ is a stratified extension, the product $\prod_{i \in I} S_i$ is also a stratified extension with $\Base(\prod_{i \in I} S_i) = \prod_{i \in I} \Base(S_i)$.

\item If $I$ is a finite set and each $S_i$ is a nil-stratified extension then so is $\prod_{i \in I} S_i$. Similarly if each $S_i$ is a finitely stratified extension then so is $\prod_{i \in I} S_i$. To see that we cannot remove the condition $|I| < \infty$, let $I = \n$ and for each $i \in I$ let $S_i$ be a finitely stratified (and hence nil-stratified) extension of height $i$. Then $\prod_{i \in I} S_i$ is a stratified extension but is neither a finitely stratified extension nor a nil-stratified extension.
\end{enumerate}
\end{proof}

Subsemigroups of (finitely, nil-) stratified extensions are not necessarily (finitely, nil-) stratified extensions. For example the bicyclic semigroup is a finitely stratified extension (in fact globally idempotent) but contains $(\mathbb{N}, +)$ as a subsemigroup which is free and hence not even a stratified extension. The class of (finitely, nil-) stratified semigroups therefore does not form a variety.

\medskip

Having considered direct products of stratified extensions, and in anticipation of the families of semigroups in the final two sections, we observe

\begin{proposition}
Let $S=\bigcup_{\alpha \in Y} S_\alpha$ be a semilattice of stratified extensions. Then $S$ is a stratified extension.
\end{proposition}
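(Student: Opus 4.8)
The plan is to establish the conclusion directly from the definition: to show that $S$ is a stratified extension I only need to verify that $\Base(S)=\bigcap_{m>0}S^m$ is nonempty. The whole argument rests on a single monotonicity observation about the base, together with the fact that each component $S_\alpha$ is genuinely a subsemigroup of $S$.

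First I would record that each $S_\alpha$ is a subsemigroup of $S$, since it is a class of the underlying semilattice congruence and hence $S_\alpha S_\alpha\subseteq S_\alpha$. The key step is then the (routine) remark that the base is monotone under passing to subsemigroups: if $T$ is any subsemigroup of a semigroup $S$, then $T^m\subseteq S^m$ for every $m\ge 1$, because every product of $m$ elements of $T$ is in particular a product of $m$ elements of $S$. Taking the intersection over all $m$ gives $\Base(T)=\bigcap_{m>0}T^m\subseteq\bigcap_{m>0}S^m=\Base(S)$.

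Applying this with $T=S_\alpha$ for any fixed $\alpha\in Y$ finishes the proof: since $S_\alpha$ is by hypothesis a stratified extension we have $\Base(S_\alpha)\ne\emptyset$, and by monotonicity $\Base(S_\alpha)\subseteq\Base(S)$, so $\Base(S)\ne\emptyset$ and $S$ is a stratified extension.

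There is essentially no obstacle here; the only point needing a line of care is the containment $\bigcap_{m>0}T^m\subseteq\bigcap_{m>0}S^m$, which is immediate once $T^m\subseteq S^m$ is noted. I would also remark that the semilattice hypothesis is in fact much stronger than needed: any semigroup that merely contains a single subsemigroup which is a stratified extension is itself a stratified extension. The semilattice structure is not required for nonemptiness of the base and is included because it is the natural setting for the finer structural descriptions pursued in the following sections; pinning down $\Base(S)$ exactly (for instance, deciding whether $\Base(S)=\bigcup_{\alpha\in Y}\Base(S_\alpha)$) would genuinely require the semilattice decomposition, but that is not part of the present claim.
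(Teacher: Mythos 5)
Your proof is correct and is essentially the paper's own argument: the paper's one-line proof observes that $\bigcup_{\alpha\in Y}\Base(S_\alpha)\subseteq\bigcap_{m>0}S^m$, which is exactly your monotonicity containment $\Base(S_\alpha)\subseteq\Base(S)$ taken over all $\alpha$ (and a single $\alpha$ suffices, as you note). Your closing remark that only one stratified subsemigroup is needed is also a valid strengthening, consistent with the paper's earlier observation that $\Reg(S)\ne\emptyset$ already forces $\Base(S)\ne\emptyset$.
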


\begin{proof}
This follows easily on observing that $\bigcup_{\alpha \in Y} \Base(S_\alpha) \subseteq \bigcap_{m > 0} S^m$.
\end{proof}

\smallskip

In the case of finitely stratified extensions, we can construct a semilattice of finitely stratified extensions which is not a finitely stratified extension. Let $Y = \mathbb{N} \cup \{0\}$ be a semilattice under the multiplication $ij = 0$ for all $i,j \in Y$ with $i \neq j$. For each $i \in \mathbb{N}$ let $S_i$ be a finitely stratified extension with height $i$ and let $S_0$ be globally idempotent. Let $S$ be the union of each $S_i$ as a semilattice of semigroups over $Y$. Then $S^m = S_0 \cup \bigcup_{i \in \mathbb{N}} {S_i}^m$. If $i > m$ then there are elements in ${S_i}^m$ which are not in ${S_i}^{m+1}$ and so $S^m \neq S^{m+1}$ for any $m \in \mathbb{N}$.

\smallskip

\section{Semilattices of group bound semigroups}

In this and the following section we explore two families of examples of semigroups that can be decomposed as semilattices of stratified extensions. In this section we introduce the definition of a conditionally completely regular semigroup and, after some preliminary results, show that if such a semigroup is also group-bound then it is a semilattice of stratified extensions.

\smallskip

A semigroup in which every regular $\cal H$-class contains an idempotent is called a {\em conditionally completely regular semigroup}. This is clearly equivalent to every regular ${\cal H}-$class being a group.

\smallskip

Let $S$ be a conditionally completely regular semigroup.  We define a relation $\rho$ on $S$ by $s\rho t$ if and only if for every $\cal D$-class $D$ of $S$ we have
$$
W(s) \cap D \neq \emptyset \iff W(t) \cap D \neq \emptyset.
$$
Clearly $\rho$ is an equivalence relation. We will show that $\rho$ is in fact a congruence, and moreover that $S/\rho$ is a semilattice.

\medskip

We begin by establishing some properties of such semigroups.

\begin{lemma} \label{D-class-lemma}
Let $S$ be a conditionally completely regular semigroup.
\begin{enumerate}
\item Every regular $\cal D$-class of $S$ is a completely simple subsemigroup of $S$.
\item Let $s\in S$ and $s' \in W(s)$. Every $\cal H$-class of $D_{s'}$ contains a weak inverse of $s$. 
\end{enumerate}
\end{lemma}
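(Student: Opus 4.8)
The plan is to handle the two parts in turn, using throughout the standing hypothesis that every regular $\mathcal H$-class is a group. For part (1), let $D$ be a regular $\mathcal D$-class. First I would show that $D$ is a subsemigroup by invoking the location-of-products theorem (see \cite{howie-95}): for $a,b\in D$ one has $ab\in R_a\cap L_b\subseteq D$ if and only if $L_a\cap R_b$ contains an idempotent, and otherwise $ab$ drops to a strictly lower $\mathcal D$-class. Since $L_a\cap R_b$ is an $\mathcal H$-class sitting inside the regular $\mathcal D$-class $D$, it is a regular $\mathcal H$-class, hence a group by hypothesis, and so contains an idempotent; therefore $ab\in D$ for all $a,b\in D$ and $D$ is closed. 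To conclude, I would observe that every element of the subsemigroup $D$ is regular, so Green's relations computed inside $D$ coincide with their restrictions from $S$ (\cite{howie-95}); thus $D$ is a single $\mathcal D$-class as a semigroup in its own right, with every $\mathcal H$-class a group, i.e. a completely regular semigroup consisting of one $\mathcal D$-class, which is precisely a completely simple semigroup.

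For part (2), set $D=D_{s'}$, which is completely simple by part (1) (note $s'$ is regular, as $s'=s'ss'$). The engine of the argument is the elementary sufficient condition that for any $t\in S$,
$$\text{if } ts\in E(S)\text{ and } ts\,\mathcal R\,t,\ \text{ then } t\in W(s),$$
since an idempotent $\mathcal R$-related to $t$ is a left identity for $t$, giving $(ts)t=t$, that is $tst=t$; dually, if $st\in E(S)$ and $st\,\mathcal L\,t$ then $t\in W(s)$. Next I would prove a covering step: if $s''\in W(s)\cap D$ then every $\mathcal H$-class contained in $L_{s''}$, and every $\mathcal H$-class contained in $R_{s''}$, meets $W(s)$. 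For the column $L_{s''}$, put $f''=s''s$, which by the stated properties of weak inverses is an idempotent with $f''\,\mathcal R\,s''$. Since $L_{s''}=Ds''$, any $t\in L_{s''}$ has the form $t=ds''$ with $d\in D$, whence $ts=df''\in D$ and $ts\,\mathcal R\,t$; as $d$ ranges over a fixed $\mathcal H$-class $H$ of $L_{s''}$ the elements $ds''$ refill $H$, while right multiplication by $f''$ carries $H$ bijectively onto an $\mathcal H$-class in the same $\mathcal R$-class, so the preimage of that group's idempotent yields $t\in H$ with $ts$ idempotent and $\mathcal R$-related to $t$, hence a weak inverse of $s$ in $H$. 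The row $R_{s''}$ is treated symmetrically using $e''=ss''$ and the dual sufficient condition.

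Finally I would assemble these into the full statement. Starting from $s'$, the covering step fills the whole row $R_{s'}$ with weak inverses of $s$. Given an arbitrary target $\mathcal H$-class $H=R\cap L$ of $D$, choose the weak inverse $s''$ supplied by the covering step in $R_{s'}\cap L$; since $s''\in L$ we have $L_{s''}=L$, and applying the covering step to $s''$ fills $L$, in particular $H=R\cap L_{s''}$, with a weak inverse of $s$. Thus every $\mathcal H$-class of $D_{s'}$ contains a weak inverse of $s$.

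The main obstacle is the covering step, and more precisely controlling products with $s$: because $s\notin D$, right multiplication by $s$ need not preserve $D$ for arbitrary elements, and it is only on the $\mathcal L$-class $L_{s''}$—where writing $t=ds''$ lets one replace $s''s$ by the idempotent $f''$—that $\rho_s$ becomes computable and lands back in $D$. The delicate point is fixing the group coordinate so that the image is exactly the idempotent of the target $\mathcal H$-class, which is what makes the sufficient condition apply; here the completely simple (Rees matrix) structure from part (1) is essential, since it guarantees that the relevant translations are bijective on $\mathcal H$-classes and hence that the required preimage exists.
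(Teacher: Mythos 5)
Your proposal is correct, and its global architecture coincides with the paper's: in part (1) both proofs come down to the existence of an idempotent in $L_a\cap R_b$ (you cite the Miller--Clifford location theorem from \cite{howie-95}, while the paper simply computes $ab\,\mathcal{L}\,eb\,\mathcal{R}\,ee=e$), and in part (2) both perform the same two-stage sweep, first filling the row of $s'$ with weak inverses of $s$ and then filling the column of each of these. The genuine difference lies in the mechanism of the covering step. The paper exhibits each new weak inverse by an explicit formula: if $r$ is the idempotent of an $\mathcal{H}$-class in the row of $ss'$, then $ss'r=r$ and $rss'=ss'$ give $(s'r)s(s'r)=s'r$ by direct calculation, so $s'r\in W(s)$ lies in $R_{s'}\cap L_r$; dually $ls'\in W(s)$ for the idempotents $l$ with $l\,\mathcal{L}\,s's$. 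You instead realise weak inverses as preimages of idempotents under the translation $t\mapsto ts$ restricted to $L_{s''}$, which forces you to prove that this translation carries each $\mathcal{H}$-class of $L_{s''}$ bijectively onto a group $\mathcal{H}$-class; that bijectivity is exactly where you need Green's lemma or the Rees coordinates supplied by part (1). So your route is somewhat heavier in machinery, whereas the paper's is a one-line verification needing only the existence of the idempotents $r$ (resp.\ $l$); on the other hand your description is arguably more conceptual, and since each $\mathcal{H}$-class contains at most one weak inverse of $s$ (a fact the paper records after Lemma~\ref{J-lattice-lemma}), the two constructions in fact produce exactly the same elements.
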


\begin{proof}
These are fairly straightforward.
\begin{enumerate}
\item Let $D$ be a regular ${\cal D}-$class and let $a,b \in D$ and let $e$ be the idempotent lying in $L_a \cap R_b$. Then $ab \mathcal{L} eb \mathcal{R} ee = e$. Hence $ab \in D$ and $D$ is a completely simple subsemigroup of $S$.
\item Let $D$ be the (regular) ${\cal D}-$class containing $s'$ and let $r$ be an idempotent such that $r \mathcal{R} ss'$.
Then $ss'r = r$ and $rss'=ss'$, and it follows that $s'r \in W(s)$ and $s'\mathcal{R}s'r\mathcal{L}r$.
\begin{center}
\begin{tikzpicture}
\draw (0,0) -- (4,0) -- (4,4) -- (0,4) -- (0,0);
\draw (1,0) -- (1,4);
\draw (2,0) -- (2,4);
\draw (3,0) -- (3,4);
\draw (0,1) -- (4,1);
\draw (0,2) -- (4,2);
\draw (0,3) -- (4,3);
\node at (1.5,1.5) {$s'$};
\node at (1.5,3.5) {$ss'$};
\node at (0.5,3.5) {$r$};
\node at (0.5,1.5) {$s'r$};
\node (a) at (0.5,3) {};
\node (b) at (2.5,3) {};
\node (c) at (0.5,-0.1) {};
\node (d) at (2.5,-0.1) {};
\end{tikzpicture}
\end{center}
Let $I=S/{\cal R}$ and $\Lambda = S/{\cal L}$ and as is normal denote the ${\cal R}-$classes as $R_i$ ($i\in I$), the ${\cal L}-$classes as $L_\lambda$ ($\lambda\in\Lambda$) and the ${\cal H}-$class $R_i\cap L_\lambda$ as $H_{i\lambda}$. Suppose $s'\in R_j$ and $ss'\in R_i$. For each $\lambda\in\Lambda$, let $r_{i\lambda}$ be the idempotent in $H_{i\lambda}$ so that we produce a weak inverse of $s$, $s'_{j\lambda}$, in $H_{j\lambda}$. Let $s'_{j\lambda}s\in L_\mu$, and for each $k\in I$ let $l_{k\mu}$ be the idempotent in $H_{k\mu}$ and note that, using a similar argument to above, $l_{k\mu}s'_{j\lambda} \in W(s)$ and $l_{k\mu}s'_{j\lambda}\in H_{k\lambda}$.
\begin{center}
\begin{tikzpicture}
\draw (0,0) -- (4,0) -- (4,4) -- (0,4) -- (0,0);
\draw (1,0) -- (1,4);
\draw (2,0) -- (2,4);
\draw (3,0) -- (3,4);
\draw (0,1) -- (4,1);
\draw (0,2) -- (4,2);
\draw (0,3) -- (4,3);
\node at (3.5,1.5) {$s'_{j\lambda}s$};
\node at (0.5,3.5) {$r_{i\lambda}$};
\node at (3.5,2.5) {$l_{k\mu}$};
\node at (0.5,1.5) {$s'_{j\lambda}$};
\node at (0.5,2.5) {$l_{k\mu}s'_{j\lambda}$};
\node (a) at (0.5,3) {};
\node (b) at (2.5,3) {};
\node (c) at (0.5,-0.1) {};
\node (d) at (2.5,-0.1) {};
\end{tikzpicture}
\end{center}
\end{enumerate}
\end{proof}

Notice that the converse of the first point is true as well. The second point allows us to give an equivalent definition of $\rho$:  $s \rho t$ if and only if for every $\cal H$-class $H$ of $S$ we have 
$$
W(s) \cap H \neq \emptyset \iff W(t) \cap H \neq \emptyset.
$$
The next result is key in what follows.

\begin{lemma} \label{rho-mult-lemma}
Let $S$ be a conditionally completely regular semigroup and let $s, t \in S$. For any $\cal D$-class $D$ of $S$ we have
$$
W(st) \cap D \neq \emptyset\text{ if and only if }W(s) \cap D \neq \emptyset\text{ and }W(t) \cap D \neq \emptyset.
$$
\end{lemma}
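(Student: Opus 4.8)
The plan is to prove the two implications separately. The forward implication is quick and uses only the factorisation of weak inverses from Lemma~\ref{E-dense-J-lemma}(1); the reverse implication is the substantive part, and is where the completely simple structure of $D$ (Lemma~\ref{D-class-lemma}) does the work.

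For the forward direction, suppose $x \in W(st) \cap D$. As in the proof of Lemma~\ref{E-dense-J-lemma}(1), I would put $w_1 = xs \in W(t)$ and $w_2 = tx \in W(s)$, so that $x = w_1 w_2$. Then $x = w_1 w_2 \in w_1 S^1$ while $w_1 = xs \in x S^1$, giving $w_1 \mathcal{R} x$, and dually $w_2 \mathcal{L} x$; hence $w_1, w_2 \in D_x = D$. This yields $W(t) \cap D \neq \emptyset$ and $W(s) \cap D \neq \emptyset$ simultaneously.

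For the reverse direction, fix $s_0 \in W(s) \cap D$ and $t_0 \in W(t) \cap D$. Since $D$ contains a regular element it is a regular $\mathcal{D}$-class, so by Lemma~\ref{D-class-lemma}(1) it is completely simple. The engine of the argument is the observation that if $w_1 \in W(t)$ and $w_2 \in W(s)$ satisfy $t w_1 = w_2 s$, then $w_1 w_2 \in W(st)$: indeed, writing $h = t w_1 = w_2 s$, which is idempotent, $(w_1 w_2)(st)(w_1 w_2) = w_1 (w_2 s)(t w_1) w_2 = w_1 h w_2 = w_1 (t w_1) w_2 = (w_1 t w_1) w_2 = w_1 w_2$, and $w_1 w_2 \in D$ because $D$ is a subsemigroup. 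So it suffices to exhibit a single idempotent $h \in D$ that is realisable both as some $t w_1$ and as some $w_2 s$.

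To produce $h$ I would adapt the construction in the proof of Lemma~\ref{D-class-lemma}(2). Let $f_0 = t t_0 \in E(S)$, with $f_0 \mathcal{L} t_0$, and let $c$ be its $\mathcal{R}$-class; for any idempotent $r \in R_c$ a short check gives $t_0 r \in W(t)$ and $t(t_0 r) = f_0 r = r$, so every idempotent of $R_c$ arises as some $t w_1$. Dually, let $e_0 = s_0 s \in E(S)$, with $e_0 \mathcal{R} s_0$, and let $c'$ be its $\mathcal{L}$-class; for any idempotent $r' \in L_{c'}$ one gets $r' s_0 \in W(s)$ and $(r' s_0) s = r' e_0 = r'$, so every idempotent of $L_{c'}$ arises as some $w_2 s$. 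Because $D$ is completely simple, $R_c \cap L_{c'}$ is a group $\mathcal{H}$-class with a unique idempotent $h$, and this $h$ lies in both families; taking $w_1 = t_0 h$ and $w_2 = h s_0$ gives $t w_1 = h = w_2 s$, whence $w_1 w_2 \in W(st) \cap D$. The routine ingredients here are the Green's-relation bookkeeping and the verifications that the displayed products are genuinely weak inverses. The step I expect to be the crux, and would treat most carefully, is the claim that the idempotents of the form $t w_1$ sweep out an entire $\mathcal{R}$-class while those of the form $w_2 s$ sweep out an entire $\mathcal{L}$-class, since it is precisely this that allows complete simplicity to force the two families to meet in a common idempotent.
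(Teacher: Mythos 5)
Your proof is correct, and the two directions deserve separate verdicts. Your ``forward'' direction (from $W(st)\cap D\neq\emptyset$) is exactly the paper's argument: the factorisation $x=(xs)(tx)$ with $xs\in W(t)$, $tx\in W(s)$, plus the Green's-relations bookkeeping putting both factors in $D$. The substantive difference is in the reverse direction. The paper gets it in two lines by invoking Lemma~\ref{D-class-lemma}(2) wholesale: taking $s'\in W(s)\cap D$, the idempotent $s's$ lies in $D$, that lemma supplies a weak inverse $t'$ of $t$ in the group $\mathcal{H}$-class $H_{s's}$, and then $t's'stt's'=t'tt's'=t's'$ gives the witness $t's'\in W(st)\cap D$. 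You bypass Lemma~\ref{D-class-lemma}(2) and argue symmetrically: you identify which idempotents arise as $tw_1$ with $w_1\in W(t)$ (those $\mathcal{R}$-related to $tt_0$) and which arise as $w_2s$ with $w_2\in W(s)$ (those $\mathcal{L}$-related to $s_0s$), then use the fact that $R_{tt_0}\cap L_{s_0s}$ is a group $\mathcal{H}$-class of the completely simple $D$ to find a common idempotent $h$, producing the witness $(t_0h)(hs_0)$. The basic trick you use --- $t_0r\in W(t)$ for idempotents $r\mathrel{\mathcal{R}}tt_0$ --- is precisely the ``row-sweeping'' step inside the paper's own proof of Lemma~\ref{D-class-lemma}(2), so the underlying mechanism is the same; but your packaging needs only one-step positioning on each side together with an idempotent in a single prescribed $\mathcal{H}$-class, rather than the full strength of weak inverses in \emph{every} $\mathcal{H}$-class of $D$. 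That makes your argument more self-contained (it would survive even if Lemma~\ref{D-class-lemma}(2) were unavailable), at the cost of length in a context where that lemma has already been proved. All the steps you flag as routine do check out, including the crux claims that the two families of idempotents exhaust an entire $\mathcal{R}$-class and an entire $\mathcal{L}$-class, and the closure claims $w_1,w_2,w_1w_2\in D$ via Lemma~\ref{D-class-lemma}(1).
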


\begin{proof}
Let $s' \in W(s) \cap D$ and suppose $W(t) \cap D \neq \emptyset$. Then $s's$ is an idempotent lying in $D$. By Lemma \ref{D-class-lemma}(2) $t$ has a weak inverse in every $\cal H$-class of $D$, so let $t'$ be the weak inverse of $t$ lying in the $\cal H$-class of $s's$. Then $t's'stt's' = t'tt's' = t's'$. By Lemma \ref{D-class-lemma}(1) $t's' \in D$ and so $W(st) \cap D \neq \emptyset$.

\smallskip

Conversely let $(st)' \in W(st) \cap D$. Then $t(st)'st(st)' = t(st)'$ and so $t(st)'$ is a weak inverse of $s$. As $t(st)' \mathcal{L} (st)'$ we have $W(s) \cap D \neq \emptyset$. Similarly we have $(st)'s \in W(t) \cap D \neq \emptyset$.
\end{proof}

\begin{corollary} \label{semilattice-cor}
Let $S$ be a conditionally completely regular semigroup and let $s, t \in S$. Then $s \rho s^2$ and $st \rho ts$.
\end{corollary}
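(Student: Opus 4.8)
The plan is to read off both claims directly from Lemma \ref{rho-mult-lemma}, since that lemma already converts the condition ``$W(\cdot)\cap D\neq\emptyset$'' for a product into a condition on the factors. The whole proof is therefore a matter of unwinding the definition of $\rho$ and applying the lemma; I do not expect a genuine obstacle here, as the substantive work was carried out in establishing Lemma \ref{rho-mult-lemma}.

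First I would recall that, by definition, $s\rho s^2$ means precisely that for every $\mathcal{D}$-class $D$ of $S$ we have $W(s)\cap D\neq\emptyset$ if and only if $W(s^2)\cap D\neq\emptyset$. To verify this I would apply Lemma \ref{rho-mult-lemma} with $t=s$, which gives $W(s^2)\cap D\neq\emptyset$ if and only if $W(s)\cap D\neq\emptyset$ and $W(s)\cap D\neq\emptyset$. The repeated conjunct collapses to the single condition $W(s)\cap D\neq\emptyset$, so the equivalence holds for every $D$ and hence $s\rho s^2$.

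For the second claim I would fix an arbitrary $\mathcal{D}$-class $D$ and apply Lemma \ref{rho-mult-lemma} twice: once to $st$ and once to $ts$. This yields that $W(st)\cap D\neq\emptyset$ is equivalent to the conjunction ``$W(s)\cap D\neq\emptyset$ and $W(t)\cap D\neq\emptyset$'', while $W(ts)\cap D\neq\emptyset$ is equivalent to ``$W(t)\cap D\neq\emptyset$ and $W(s)\cap D\neq\emptyset$''. Since conjunction is commutative, these two conditions coincide, so $W(st)\cap D\neq\emptyset$ if and only if $W(ts)\cap D\neq\emptyset$. As $D$ was arbitrary, the definition of $\rho$ gives $st\rho ts$, completing the argument.

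The only point requiring any care is the bookkeeping between the two equivalent formulations of $\rho$ (the $\mathcal{D}$-class version used in Lemma \ref{rho-mult-lemma} and the $\mathcal{H}$-class version noted after Lemma \ref{D-class-lemma}); I would work throughout with the $\mathcal{D}$-class formulation to match the lemma directly and avoid any translation step.
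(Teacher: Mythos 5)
Your proof is correct and is exactly the intended derivation: the paper states this corollary without proof precisely because it follows from Lemma \ref{rho-mult-lemma} by the direct unwinding you describe (substituting $t=s$ for the first claim, and using commutativity of conjunction for the second). No gaps; your choice to work entirely with the $\mathcal{D}$-class formulation of $\rho$ is the right one.
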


\begin{corollary}
Let $S$ be a conditionally completely regular semigroup. Either $S$ is $E-$dense or the set $\{s \in S | W(s) = \emptyset\}$ is an ideal of $S$.
\end{corollary}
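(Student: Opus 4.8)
The plan is to read off everything from Lemma~\ref{rho-mult-lemma}. Writing $N=\{s\in S\mid W(s)=\emptyset\}$ for the set of elements with no weak inverse, the point is that $S$ is $E-$dense precisely when $W(s)\neq\emptyset$ for every $s\in S$, i.e. precisely when $N=\emptyset$; so the dichotomy reduces to showing that whenever $N$ is nonempty it must be an ideal of $S$.

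First I would justify the reformulation of $E-$density. By characterisation (4) of the earlier lemma on $E-$dense semigroups, $S$ is $E-$dense if and only if every $s\in S$ has some $s'$ with $s'ss'=s'$, that is, if and only if $W(s)\neq\emptyset$ for all $s$. Hence if $N=\emptyset$ we are in the first alternative, and it remains to treat the case $N\neq\emptyset$.

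Next I would apply Lemma~\ref{rho-mult-lemma} to absorb products into $N$. Fix $s\in N$ and an arbitrary $t\in S$; I claim $st,ts\in N$. The forward implication of the lemma states that for any $\mathcal{D}-$class $D$, if $W(st)\cap D\neq\emptyset$ then $W(s)\cap D\neq\emptyset$ and $W(t)\cap D\neq\emptyset$. Taking the contrapositive, $W(s)\cap D=\emptyset$ forces $W(st)\cap D=\emptyset$. Since $W(s)=\emptyset$ means $W(s)\cap D=\emptyset$ for every $\mathcal{D}-$class $D$, we obtain $W(st)\cap D=\emptyset$ for every $D$, and therefore $W(st)=\emptyset$, i.e. $st\in N$. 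The symmetric argument, using that $W(ts)\cap D\neq\emptyset$ likewise forces $W(s)\cap D\neq\emptyset$, gives $ts\in N$. Thus $N$ absorbs multiplication on both sides and is an ideal.

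There is no substantive obstacle here: the statement is an immediate corollary of Lemma~\ref{rho-mult-lemma}, and the only care needed is bookkeeping. One should record that, under the usual convention that an ideal is nonempty, the two possibilities $N=\emptyset$ and $N\neq\emptyset$ correspond exactly to the two alternatives in the statement. The single place to be precise is the quantifier manipulation: the hypothesis $W(s)=\emptyset$ must be read as ``$W(s)\cap D=\emptyset$ for \emph{every} $\mathcal{D}-$class $D$'' before it is fed into the lemma.
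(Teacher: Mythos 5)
Your proof is correct and is exactly the argument the paper intends: the corollary is stated without proof immediately after Lemma~\ref{rho-mult-lemma}, and the intended derivation is precisely your contrapositive reading of that lemma (if $W(s)=\emptyset$ then $W(st)\cap D=\emptyset$ and $W(ts)\cap D=\emptyset$ for every $\mathcal{D}$-class $D$), together with the observation that $E$-density is equivalent to $N=\emptyset$. One minor remark: the same conclusion also follows, even without conditional complete regularity, from Lemma~\ref{E-dense-J-lemma}(1), since $W(st)\subseteq W(t)W(s)$ is empty whenever $W(s)$ or $W(t)$ is.
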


We can now prove the following theorem.

\begin{theorem}\label{semilattice-theorem}
Let $S$ be a conditionally completely regular semigroup. Then the relation $\rho$ is a congruence and $S/\rho$ is a semilattice.
\end{theorem}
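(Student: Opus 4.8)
The plan is to obtain the congruence property directly from Lemma~\ref{rho-mult-lemma} and then read off the semilattice structure from Corollary~\ref{semilattice-cor}. Since $\rho$ has already been observed to be an equivalence relation, the only substantive task is to verify that $\rho$ is left and right compatible with multiplication.

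First I would fix $s,t,u\in S$ with $s\,\rho\,t$ and compare $us$ with $ut$ against an arbitrary $\mathcal{D}$-class $D$. By Lemma~\ref{rho-mult-lemma}, the statement $W(us)\cap D\neq\emptyset$ is equivalent to the conjunction ``$W(u)\cap D\neq\emptyset$ and $W(s)\cap D\neq\emptyset$''. The hypothesis $s\,\rho\,t$ lets me replace the second conjunct by $W(t)\cap D\neq\emptyset$, and applying Lemma~\ref{rho-mult-lemma} again identifies the resulting conjunction with $W(ut)\cap D\neq\emptyset$. As $D$ was arbitrary, this gives $us\,\rho\,ut$; the entirely symmetric computation, using the factorisation of $W(su)$ on the other side, gives $su\,\rho\,tu$. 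Left and right compatibility together show that $\rho$ is a congruence. (Two-sided compatibility then yields the full substitution property $s\,\rho\,t,\ u\,\rho\,v\Rightarrow su\,\rho\,tv$ by transitivity through $tu$.)

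For the semilattice claim I would pass to the quotient $S/\rho$ and invoke Corollary~\ref{semilattice-cor}, which provides $s\,\rho\,s^2$ and $st\,\rho\,ts$ for all $s,t\in S$. In $S/\rho$ these become $(s\rho)^2=s\rho$ and $(s\rho)(t\rho)=(t\rho)(s\rho)$, so $S/\rho$ is a commutative band and hence a semilattice.

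I do not expect a genuine obstacle here, precisely because the real work has been front-loaded into Lemma~\ref{rho-mult-lemma}: once that biconditional factorisation of $W(st)$ across $\mathcal{D}$-classes is available, the compatibility argument is a purely logical manipulation of the defining biconditional of $\rho$. The only points needing a little care are to carry out the conjunction-factorisation symmetrically for both $us$ and $su$, and to note that no extra hypotheses beyond conditional complete regularity are consumed beyond what Lemma~\ref{rho-mult-lemma} and Corollary~\ref{semilattice-cor} already assume.
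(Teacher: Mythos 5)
Your proposal is correct and follows essentially the same route as the paper: both derive compatibility of $\rho$ with multiplication purely from the biconditional in Lemma~\ref{rho-mult-lemma} and then obtain the semilattice property from Corollary~\ref{semilattice-cor}. The only cosmetic difference is that you establish left and right compatibility separately and combine them by transitivity, whereas the paper verifies the two-sided substitution $a\,\rho\,b,\ c\,\rho\,d \Rightarrow ac\,\rho\,bd$ in a single step.
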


\begin{proof}
Let $a,b,c,d \in S$ such that $a \rho b$ and $c \rho d$ and let $D$ be a $\cal D$-class of $S$. By Lemma \ref{rho-mult-lemma} $W(ac) \cap D \neq \emptyset$ if and only if $W(a) \cap D \neq \emptyset$ and $W(c) \cap D \neq \emptyset$. As $a \rho b$ and $c \rho d$ this latter condition is equivalent to $W(b) \cap D \neq \emptyset$ and $W(d) \cap D \neq \emptyset$ which is in turn equivalent to $W(bd) \cap D \neq \emptyset$ by Lemma \ref{rho-mult-lemma}. It follows that $ac \rho bd$ and so $\rho$ is a congruence. That $S/\rho$ is a semilattice follows from Corollary \ref{semilattice-cor}.
\end{proof}

We can now prove some results about the structure of $S$.

\begin{lemma}\label{rho-equals-D-lemma}
Let $S$ be a conditionally completely regular semigroup and let $s, t \in \Reg(S)$. Then $s \rho t$ if and only if $s \mathcal{D} t$.
\end{lemma}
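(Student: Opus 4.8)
The plan is to reduce the whole statement to the case of idempotents, and then to recover a $\mathcal{D}$-relation from the combinatorial data that $\rho$ records. For $x\in S$ let me write $\mathcal{W}(x)$ for the set of $\mathcal{D}$-classes $D$ with $W(x)\cap D\neq\emptyset$; then $s\rho t$ means exactly $\mathcal{W}(s)=\mathcal{W}(t)$, and Lemma~\ref{rho-mult-lemma} says precisely that $\mathcal{W}(xy)=\mathcal{W}(x)\cap\mathcal{W}(y)$. The first step is the reduction. If $s$ is regular then its $\mathcal{H}$-class is a group with identity $e$, so $s\mathcal{H}e$ and hence $s\mathcal{D}e$; moreover $s=es$ gives $\mathcal{W}(s)\subseteq\mathcal{W}(e)$, and $e=ss^{-1}$ (group inverse) gives $\mathcal{W}(e)\subseteq\mathcal{W}(s)$, so $s\rho e$. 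Since $\rho$ and $\mathcal{D}$ are equivalence relations, for regular $s,t$ with associated idempotents $e,f$ we get both $s\rho t\Leftrightarrow e\rho f$ and $s\mathcal{D}t\Leftrightarrow e\mathcal{D}f$, so it is enough to prove the equivalence for idempotents.

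For the implication $e\mathcal{D}f\Rightarrow e\rho f$ (idempotents), I would use the standard fact that $\mathcal{D}$-related idempotents admit mutually inverse elements $a,a'$ with $aa'=e$ and $a'a=f$; then Corollary~\ref{semilattice-cor} (equivalently $\mathcal{W}(aa')=\mathcal{W}(a)\cap\mathcal{W}(a')=\mathcal{W}(a'a)$) gives $e\rho f$ at once.

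The substantive direction is $e\rho f\Rightarrow e\mathcal{D}f$. Since $e\in W(e)$ we have $D_e\in\mathcal{W}(e)=\mathcal{W}(f)$, so some $f'\in W(f)$ satisfies $f'\mathcal{D}e$. A short computation shows that $f'$ and $v:=ff'f$ are mutually inverse, so $v$ is an idempotent with $v\le f$ in the natural partial order and $v\mathcal{D}f'\mathcal{D}e$. Then $v\le f$ gives $J_v\le J_f$ while $v\mathcal{D}e$ gives $J_v=J_e$, so $J_e\le J_f$; running the symmetric argument (interchanging $e$ and $f$) yields $J_f\le J_e$, hence $J_e=J_f$ and $J_v=J_f$, that is, $v\mathcal{J}f$. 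Granting the claim below, $v\mathcal{J}f$ together with $v\le f$ forces $v=f$, whence $f=v\mathcal{D}e$, as required.

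The main obstacle is exactly this last upgrade from $\mathcal{J}$ to $\mathcal{D}$, which fails in a general semigroup; I expect it to be the heart of the argument and the place where conditional complete regularity is essential. Concretely I would prove: if $v,f$ are idempotents with $v\le f$ and $v\mathcal{J}f$, then $v=f$. Writing $f=xvy$ with $x,y\in S^1$ and setting $P=fxv$, $Q=vyf$, one checks that $PQ=f$ and that $g:=QP$ is an idempotent with $g\le v\le f$ and $f\mathrel{\mathcal{R}}P\mathrel{\mathcal{L}}g$, so $g\mathcal{D}f$. Thus $g$ and $f$ are $\mathcal{D}$-related idempotents with $g\le f$; but by Lemma~\ref{D-class-lemma}(1) the $\mathcal{D}$-class $D_f$ is completely simple, so its idempotents are primitive and $g\le f$ forces $g=f$, which collapses $g\le v\le f$ to $v=f$. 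This primitivity step is the one genuinely new ingredient; everything else is bookkeeping with Green's relations and Lemma~\ref{rho-mult-lemma}.
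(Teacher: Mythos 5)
Your proof is correct, but it takes a genuinely different route from the paper's. The paper handles the forward direction by showing outright that $\mathcal{J}$ (and hence $\mathcal{D}$) is contained in $\rho$: writing $s=utv$ and applying Lemma~\ref{rho-mult-lemma}, with a dual argument for the reverse containment of weak-inverse data. For the converse it never leaves the regular elements $s,t$ themselves: since $W(t)\cap D_s\neq\emptyset$, Lemma~\ref{D-class-lemma}(2) supplies weak inverses $t'\in W(t)$ with $t'\mathcal{L}s$ and $s'\in W(s)$ with $s'\mathcal{R}t$, and the chain $s \mathcal{L} t' \mathcal{R} t't \mathcal{L} st \mathcal{R} ss' \mathcal{L} s' \mathcal{R} t$ (using that $\mathcal{L}$ and $\mathcal{R}$ are right and left congruences) gives $s\mathcal{D}t$ in one stroke. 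You instead reduce everything to idempotents, and your hard direction goes through a $\mathcal{J}$-class comparison (via $v=ff'f$ and its symmetric counterpart) followed by an upgrade from $\mathcal{J}$ back to $\mathcal{D}$ using primitivity of idempotents in completely simple semigroups; so you use Lemma~\ref{D-class-lemma}(1) exactly where the paper uses part (2), with Lemma~\ref{rho-mult-lemma} underpinning both arguments. The paper's route buys brevity and a stronger byproduct: its first half proves $\mathcal{J}\subseteq\rho$ for \emph{arbitrary} elements, not just regular ones, which the surrounding text then exploits. Your route buys a clean intermediate fact of independent interest --- in a conditionally completely regular semigroup, comparable $\mathcal{J}$-related idempotents coincide (false in general, e.g.\ in the bicyclic monoid, so conditional complete regularity is genuinely needed there, as you note) --- and in effect it establishes directly that regular $\mathcal{J}$-classes and regular $\mathcal{D}$-classes agree, a fact the paper only records as a consequence after the lemma.
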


\begin{proof}
From Lemma~\ref{rho-mult-lemma} it follows that all of Green's relations are contained in $\rho$. To see this suppose that $(s,t)\in{\cal J}$. Then there exists $u,v\in S^1$ such that $s = utv$. So for every ${\cal D-}$class $D$, if $W(s)\cap D\ne\emptyset$ then $W(utv)\cap D\ne\emptyset$. Hence by Lemma~\ref{rho-mult-lemma} $W(t)\cap D\ne\emptyset$. By a dual argument we then deduce that $W(s)\cap D\ne\emptyset$ if and only if $W(t)\cap D\ne\emptyset$ and so $(s,t)\in\rho$.

As $s$ is regular it has an inverse which lies in the same $\cal D$-class and so $W(s) \cap D_s \neq \emptyset$. Hence $W(t) \cap D_s \neq \emptyset$ and by Lemma \ref{D-class-lemma} there exists $t' \in W(t)$ such that $t' \mathcal{L} s$. By a similar argument there exists $s' \in W(s)$ such that $s' \mathcal{R} t$. Then
$$
s \mathcal{L} t' \mathcal{R} t't \mathcal{L} st \mathcal{R} ss' \mathcal{L} s' \mathcal{R} t
$$
and so $s \mathcal{D} t$ as required.
\end{proof}

It follows that for each $\rho$-class $S_\alpha$ either $S_\alpha$ has no regular elements or the regular elements in $S_\alpha$ are contained within a single $\cal D$-class and hence by Lemma \ref{D-class-lemma} form a completely simple subsemigroup of $S_\alpha$. In the latter case $S_\alpha$ is an $E-$dense semigroup as by definition of $\rho$ each element has a weak inverse lying in the regular $\cal D$-class. Since each $\cal J$-class is contained within a $\rho$-class, it also follows that the regular $\cal J$-classes of $S$ are exactly the regular $\cal D$-classes.

\begin{lemma}\label{e-dense-subsemigroup-lemma}
Let $S$ be a conditionally completely regular semigroup and let $x\in S$. Then $x\rho$ is an $E-$dense subsemigroup of $S$ if and only if $x\rho$ contains a regular element.
\end{lemma}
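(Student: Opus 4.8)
The plan is to prove the two implications separately, but first I would record the preliminary fact that $x\rho$ is automatically a subsemigroup of $S$, so that the phrase ``$E$-dense subsemigroup'' makes sense and only the $E$-density is genuinely at issue. Indeed, by Theorem~\ref{semilattice-theorem} the quotient $S/\rho$ is a semilattice, so $x\rho$ is an idempotent of $S/\rho$; hence for any $a,b\in x\rho$ we have $(ab)\rho=(a\rho)(b\rho)=x\rho$, giving $ab\in x\rho$. Thus $x\rho$ is closed under multiplication, and its idempotents are exactly $E(S)\cap x\rho$.

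For the forward implication I would argue that an $E$-dense semigroup always contains a regular element. Concretely, applying the definition of $E$-density to $x$ itself produces $t\in x\rho$ with $xt\in E(S)\cap x\rho$; this idempotent is a regular element of $x\rho$. (Equivalently, any weak inverse supplied by $E$-density satisfies $s'ss'=s'$ and is therefore itself regular.) This direction is immediate and needs no structural input beyond the definitions.

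The substantive direction is the converse. Suppose $x\rho$ contains a regular element $r$; I want to show that every $s\in x\rho$ possesses a weak inverse lying inside $x\rho$, which by the characterisation of $E$-density via weak inverses (condition (4) of the equivalent formulations given earlier) is exactly what is required. Since $r$ is regular it has an inverse in its own $\mathcal{D}$-class, so $W(r)\cap D_r\neq\emptyset$. Now take any $s\in x\rho$; then $s\rho r$, so the very definition of $\rho$ forces $W(s)\cap D_r\neq\emptyset$, yielding a weak inverse $s'\in W(s)\cap D_r$. It remains to see that $s'$ returns to the class $x\rho$: being a weak inverse, $s'$ is regular, and it lies in $D_r$, so Lemma~\ref{rho-equals-D-lemma} gives $s'\rho r$, i.e.\ $s'\in x\rho$. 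Hence every element of $x\rho$ has a weak inverse within $x\rho$, and $x\rho$ is $E$-dense.

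The one step I expect to be the crux is the final observation that the weak inverse $s'$ extracted from $D_r$ genuinely lands back in the $\rho$-class $x\rho$ rather than merely somewhere in $S$. This is precisely where Lemma~\ref{rho-equals-D-lemma} — that $\rho$ restricts to $\mathcal{D}$ on regular elements — does the essential work, pinning all the regular elements of $x\rho$ into the single $\mathcal{D}$-class $D_r$ and thereby closing the argument; everything else is bookkeeping with the definition of $\rho$.
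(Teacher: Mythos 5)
Your proof is correct and matches the paper's argument in all essentials: the forward direction is immediate, and the converse extracts a weak inverse of each $s\in x\rho$ from the regular $\mathcal{D}$-class $D_r$ via the definition of $\rho$, then uses the containment of $\mathcal{D}$ in $\rho$ (your appeal to Lemma~\ref{rho-equals-D-lemma}, the paper's observation that $D_y\subseteq x\rho$) to see that this weak inverse lies in $x\rho$. Your write-up is slightly more explicit about why the weak inverse returns to the class, but it is the same proof.
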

\begin{proof}
One way round is obvious. That $x\rho$ is a subsemigroup of $S$ follows from the fact that $\rho$ is a semilattice. Let $y\in x\rho$ be regular. Then there exists $y'\in W(y)\cap D_y$ and so for any $z\in x\rho$ there exists $z'\in W(z)\cap D_y$. Since $D_y\subseteq x\rho$ then $x\rho$ is $E-$dense.
\end{proof}

\begin{lemma}\label{regS-ideal-lemma}
Let $S$ be an $E-$dense semigroup such that $\Reg(S)$ is a completely simple semigroup. Then $\Reg(S)$ is an ideal of $S$. 
\end{lemma}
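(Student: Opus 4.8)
The plan is to identify $R:=\Reg(S)$ with the kernel (minimal ideal) of $S$, after which the ideal property is automatic. First I would use that $W(S)=\Reg(S)=R$ together with $E$-density to note that every $x\in S$ has a weak inverse $x'\in W(x)\subseteq R$, whence Lemma~\ref{E-dense-J-lemma}(2) gives $J_{x'}\le J_x$. Since $R$ is completely simple it is simple, so for any $a,b\in R$ we have $a\in RbR\subseteq S^1bS^1$ and symmetrically; hence all of $R$ lies in a single $\mathcal J$-class $J$ of $S$. Taking the weak inverse $x'$ to lie in $R$ in the displayed inequality then gives $J\le J_x$ for every $x\in S$, so $J$ is the least $\mathcal J$-class of $S$.

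Next I would record the routine fact that the least $\mathcal J$-class is an ideal: if $a\in J$ and $s\in S$ then $J_{sa}\le J_a=J$, while minimality forces $J\le J_{sa}$, so $sa\in J$, and dually $as\in J$. Thus $J$ is the kernel $K$ of $S$ and, as a minimal ideal, is a simple semigroup. Since $R\subseteq J=K$, it remains only to prove the reverse inclusion $K\subseteq R$, i.e.\ that every element of the kernel is regular.

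For this I would appeal to the characterisation that a simple semigroup containing a primitive idempotent is completely simple. Every idempotent of $S$ is regular, so $E(S)=E(R)$; and if $f\le e$ for idempotents $e,f$ (that is, $ef=fe=f$), this relation is computed entirely inside the subsemigroup $R$, where $e$ is primitive, forcing $f=e$. Hence any idempotent of $R\subseteq K$ is primitive in $K$, so $K$ is simple with a primitive idempotent and is therefore completely simple, in particular regular. This yields $K=\Reg(K)\subseteq\Reg(S)=R$, and combined with $R\subseteq K$ we obtain $R=K$, the kernel, which is an ideal of $S$.

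The main obstacle is precisely the inclusion $K\subseteq R$. Direct attempts to regularise an arbitrary element run in circles: $E$-density only supplies a weak inverse $x'$ with $x'xx'=x'$, and although $xx'x$ can be shown to be regular, its equality with $x$ is \emph{equivalent} to regularity of $x$, so no progress is made element-by-element. The structural route --- pinning $R$ down as the least $\mathcal J$-class and then invoking the ``simple plus primitive idempotent'' theorem to force the whole kernel to be regular --- is what bypasses this difficulty.
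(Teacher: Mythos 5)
Your proof is correct, but it takes a genuinely different route from the paper's. The paper argues element-wise and avoids the obstacle you describe in your final paragraph by never trying to regularise an arbitrary element: it only regularises products of a regular element with an arbitrary one. Concretely, given $s \in \Reg(S)$ and $t \in S$, it picks $t' \in W(t)$ (using $E$-density), notes that $tt'$ is an idempotent of $\Reg(S)$, and uses the completely simple hypothesis to choose an inverse $s' \in V(s)$ with $s' \mathcal{R} tt'$, so that $tt's' = s'$; then $st\,(t's')\,st = ss'st = st$ and $(t's')st(t's') = t's'$, so $st$ is regular, and dually $ts$ is regular, giving the ideal property directly. Your argument is instead structural: you place $\Reg(S)$ inside a single $\mathcal{J}$-class of $S$, use $E$-density together with Lemma~\ref{E-dense-J-lemma}(2) to show this is the minimum $\mathcal{J}$-class, deduce that it is an ideal and hence the kernel, and then force the kernel back into $\Reg(S)$ via ``simple with a primitive idempotent implies completely simple, hence regular.'' Each step checks out (in particular, the passage from primitivity of idempotents in $\Reg(S)$ to primitivity in the kernel is valid because $E(S) = E(\Reg(S))$ and the order $f \leq e$ is computed by the same products in both semigroups). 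The trade-off: your route imports several classical facts (the kernel of a semigroup is simple; all idempotents of a completely simple semigroup are primitive; simple plus primitive idempotent gives completely simple and hence regular --- essentially Rees' theorem), whereas the paper's computation is short and self-contained; in exchange you establish strictly more than the statement asks, namely that $\Reg(S)$ is not merely an ideal but is the kernel (minimum ideal) of $S$ and coincides with its least $\mathcal{J}$-class, facts the paper's proof does not record.
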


\begin{proof}
Let $s \in \Reg(S)$ and $t \in S$. Let $t' \in W(t)$ and let ${\cal H}$ be Green's ${\cal H}-$relation on $\Reg(S)$. As $\Reg(S)$ is completely simple every regular $\cal H$-class contains an inverse of $s$ so we may choose $s' \in V(s)$ such that $s' \mathcal{R} tt'$. Then $t's'stt's' = t's'ss' = t's'$ and $stt's'st = ss'st = st$. Hence $st$ is regular and so $\Reg(S)$ is a right ideal. A dual argument shows $\Reg(S)$ is a left ideal and hence an ideal.
\end{proof}

\begin{lemma}
Let $S$ be a conditionally completely regular semigroup, let $\alpha\in S/\rho$, let $S_\alpha={\rho^{\natural}}^{-1}(\alpha)$ and let $s \in S_\alpha$. Then for all $\beta\in S/\rho$, $S_\beta$ contains a weak inverse of $s$ if and only if $S_\beta$ contains a regular element and $\beta \leq \alpha$.
\end{lemma}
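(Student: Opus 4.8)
The plan is to prove the two implications separately, treating the forward direction (a weak inverse of $s$ in $S_\beta$ forces both a regular element and $\beta\le\alpha$) as the routine half and the converse as the substantive half. Throughout I will use that the natural order on the semilattice $S/\rho$ is given by $\beta\le\alpha\iff\alpha\beta=\beta$, and that $\rho$ is a congruence by Theorem~\ref{semilattice-theorem}.

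For the forward direction, suppose $s'\in W(s)\cap S_\beta$. Since $W(S)=\Reg(S)$, the element $s'$ is itself regular, so $S_\beta$ certainly contains a regular element. To get $\beta\le\alpha$ I would use the standard fact that $ss'$ is idempotent with $ss'\,\mathcal{L}\,s'$; as $ss'$ and $s'$ are both regular, Lemma~\ref{rho-equals-D-lemma} upgrades this $\mathcal{L}$-relation to $ss'\,\rho\,s'$. Reading this through the congruence $\rho$ gives $\alpha\beta=(ss')\rho=s'\rho=\beta$, which is exactly $\beta\le\alpha$.

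For the converse, assume $S_\beta$ contains a regular element and $\beta\le\alpha$. First I would localise inside $S_\beta$: by Lemma~\ref{e-dense-subsemigroup-lemma} the subsemigroup $S_\beta$ is $E$-dense, its regular elements form a single completely simple $\mathcal{D}$-class $D_\beta$, and then Lemma~\ref{regS-ideal-lemma} makes $D_\beta$ an ideal of $S_\beta$. Fixing an idempotent $e\in D_\beta$, the hypothesis $\beta\le\alpha$ gives $\alpha\beta=\beta$, so that $es$ and $se$ lie in $S_\beta$; consequently $ese=e(se)\in D_\beta\,S_\beta\subseteq D_\beta$ since $D_\beta$ is an ideal of $S_\beta$. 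Thus $ese$ is regular and has an inverse in its own class $D_\beta$, so $W(ese)\cap D_\beta\neq\emptyset$. Since $e\in W(e)\cap D_\beta$, Lemma~\ref{rho-mult-lemma} then peels off the two idempotents, converting $W(ese)\cap D_\beta\neq\emptyset$ first into $W(se)\cap D_\beta\neq\emptyset$ and then into $W(s)\cap D_\beta\neq\emptyset$, which produces the required weak inverse of $s$ inside $D_\beta\subseteq S_\beta$.

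The hard part will be the converse, because Lemma~\ref{rho-mult-lemma} only detects weak inverses and gives no way to read one off directly from a regular element of $S_\beta$. The key manoeuvre is to sandwich $s$ between copies of the idempotent $e\in D_\beta$: although $s\notin S_\beta$, the products $es$ and $se$ do land in $S_\beta$ — this is precisely where $\beta\le\alpha$ enters — so $ese$ falls into the ideal $D_\beta$ and is therefore regular, after which the multiplication lemma finishes mechanically. I would also take care to check that $\Reg(S_\beta)$, computed intrinsically inside $S_\beta$, coincides with the regular elements of $S$ lying in $S_\beta$ (an inverse of such an element stays in the same $\mathcal{D}$-class, hence in $S_\beta$), so that Lemmas~\ref{e-dense-subsemigroup-lemma} and~\ref{regS-ideal-lemma} genuinely apply to the subsemigroup $S_\beta$.
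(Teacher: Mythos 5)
Your proof is correct in both directions, but it takes a genuinely different and heavier route than the paper's, which needs only the congruence property of $\rho$ and Lemma~\ref{e-dense-subsemigroup-lemma}. For the forward direction the paper simply computes $s'=s'ss'\in S_\beta S_\alpha S_\beta\subseteq S_{\alpha\beta}$; since the $\rho$-classes partition $S$ this forces $\alpha\beta=\beta$, with no need for your detour through $ss'\,\mathcal{L}\,s'$ and Lemma~\ref{rho-equals-D-lemma}. For the converse the paper takes an \emph{arbitrary} $t\in S_\beta$, notes $st\in S_{\alpha\beta}=S_\beta$, uses $E$-density of $S_\beta$ to extract $(st)'\in W(st)\cap S_\beta$, and verifies directly that $t(st)'$ is a weak inverse of $s$ lying in $S_\beta$ --- the same peeling computation that proves Lemma~\ref{rho-mult-lemma}, applied bare-handed. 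You instead manufacture regularity: you pass to the completely simple core $D_\beta$ (a fact which comes from Lemma~\ref{rho-equals-D-lemma} together with Lemma~\ref{D-class-lemma}, not from Lemma~\ref{e-dense-subsemigroup-lemma} as you cite), invoke Lemma~\ref{regS-ideal-lemma} to make $D_\beta$ an ideal of $S_\beta$ so that the sandwich $ese$ lands in $D_\beta$ and is regular, and then apply Lemma~\ref{rho-mult-lemma} twice to strip off the idempotents. All of this is sound, and your care over intrinsic versus ambient regularity in $S_\beta$ is exactly what legitimises the appeal to Lemma~\ref{regS-ideal-lemma}; what your version buys is a proof assembled wholly from the paper's named lemmas, with the weak inverse located explicitly in $D_\beta=\Reg(S_\beta)$. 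What it costs is the importation of structure the statement does not need: even within your own scheme, $E$-density of $S_\beta$ already places some $(se)'\in W(se)$ inside $S_\beta$, and weak inverses are automatically regular, so $W(se)\cap D_\beta\neq\emptyset$ holds at once and a single application of Lemma~\ref{rho-mult-lemma} to the product $s\cdot e$ would finish --- the $ese$ sandwich and Lemma~\ref{regS-ideal-lemma} are dispensable. The paper's proof makes visible that the lemma is really nothing more than the semilattice congruence plus the $t(st)'$ trick.
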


\begin{proof}
Suppose $\beta \leq \alpha$ and $S_\beta$ contains regular elements. Let $t \in S_\beta$. Then $st \in S_{\alpha\beta} = S_\beta$. As $S_\beta$ contains a regular element it is $E-$dense by Lemma~\ref{e-dense-subsemigroup-lemma}, and so there exists $(st)' \in W(st) \cap S_\beta$. Then $t(st)'st(st)' = t(st)'$ so $t(st)' \in W(s) \cap S_\beta$ as required. Conversely, let $s' \in W(s) \cap S_\beta$. Clearly $s'$ is regular, and $s' = s'ss' \in S_{\beta\alpha\beta} = S_{\alpha\beta}$ so $\alpha\beta = \beta$ and hence $\beta \leq \alpha$ as required.
\end{proof}

From the perspective of stratified extensions, we cannot say anything about these semigroups in general. For example, a free semigroup $S$ and a group $G$ both satisfy the property that every regular ${\cal H}$-class contains an idempotent, but $\Base(S) = \emptyset$ and $\Base(G) = G$. One condition that allows us to make more precise statements is to require that $S$ is a group-bound semigroup. Note that group-bound implies eventually regular, and when every regular ${\cal H}$-class contains an idempotent the two concepts are equivalent.

We will show that applying our results to a semigroup which is also group-bound gives the same decomposition as that in Theorem~\ref{shevrin-theorem-3}.

If $S$ is a group-bound semigroup and $e \in E(S)$ then let $H_e$ denote the largest subgroup of $S$ containing $e$. The set of elements $s$ such that $s^n \in H_e$ for some $n \in \mathbb{N}$ is denoted by $K_e$. This is well defined in the sense that if $s^n \in H_e$ we have $s^m \in H_e$ for all $m>n$ (\cite[Lemma 1]{shevrin-95}). It also follows that the sets $K_e$ partition $S$. In general $K_e$ is not a subsemigroup of $S$ (\cite[Proposition 7]{shevrin-95}) and in addition in a group bound semigroup ${\cal D} = {\cal J}$ (\cite[Lemma 4]{shevrin-95}). As is usual, $J_s$ will denote the $\cal J-$class of $s$.

The following result is important in what follows.

\begin{lemma} \label{J-lattice-lemma}
Let $S$ be an eventually regular conditionally completely regular semigroup. If $s \in K_e$ then $J_e$ is the greatest $\mathcal{J}$-class containing a weak inverse of $s$. Moreover, if $e \mathcal{J} f$  and $s \in K_e$ and $t \in K_f$ then $(s,t)\in\rho$.
\end{lemma}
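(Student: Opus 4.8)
The plan is to exploit the fact that, modulo $\rho$, an element is indistinguishable from its powers, so that an $s\in K_e$ can be replaced by a genuine group element of $H_e$, reducing both assertions to statements about regular elements. The first thing I would record is that $s\rho s^n$ for every $n\ge1$. This follows from Corollary~\ref{semilattice-cor}, which gives $s\rho s^2$, together with the fact that $\rho$ is a congruence (Theorem~\ref{semilattice-theorem}): left-multiplying $s\rho s^2$ repeatedly by $s$ and using transitivity yields $s\rho s^n$ (alternatively it is immediate from iterating Lemma~\ref{rho-mult-lemma} with all factors equal to $s$). Since $s\in K_e$, I may then fix $n$ with $g:=s^n\in H_e$, so that $s\rho g$; in particular, by the definition of $\rho$, a $\mathcal{D}$-class meets $W(s)$ if and only if it meets $W(g)$.

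For the first assertion I would treat existence and maximality separately, in each case transferring the question from $s$ to the regular element $g$. For existence, the group inverse $g^{-1}\in H_e$ is a weak inverse of $g$ lying in $D_e$, so $W(g)\cap D_e\neq\emptyset$ and hence $W(s)\cap D_e\neq\emptyset$; thus $J_e=J_g$ contains a weak inverse of $s$. For maximality, take any $s'\in W(s)$; then $W(g)$ meets the $\mathcal{D}$-class $D_{s'}$, producing $g'\in W(g)$ with $J_{g'}=J_{s'}$, and since $S$ is $E$-dense (eventually regular semigroups are $E$-dense) Lemma~\ref{E-dense-J-lemma}(2) gives $J_{g'}\le J_g=J_e$. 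Hence $J_{s'}\le J_e$, and $J_e$ is the greatest $\mathcal{J}$-class containing a weak inverse of $s$.

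The second assertion then reduces to a short $\rho$-chain. From $s\rho g$ and $g\mathcal{H}e$ (so $g\mathcal{J}e$), together with the containment $\mathcal{J}\subseteq\rho$ established in the proof of Lemma~\ref{rho-equals-D-lemma}, I obtain $s\rho e$; symmetrically, writing $h:=t^m\in H_f$ gives $t\rho f$; and $e\mathcal{J}f$ gives $e\rho f$. Transitivity of $\rho$ then yields $s\rho t$.

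I expect the only genuine subtlety to lie in the maximality step: one must pass from a statement about the weak inverses of the possibly non-regular element $s$ to the regular element $g$, and it is precisely the $\mathcal{D}$-class formulation of $\rho$ that licenses this transfer. Everything else is bookkeeping once $s\rho s^n$ and the reduction to $g\in H_e$ are in place.
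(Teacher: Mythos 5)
Your proof is correct, and it takes a genuinely more economical route than the paper's. For maximality the paper argues by an explicit induction: starting from $s_1\in W(s)$ lying in a group $H_f$, it constructs weak inverses $w_n\in W(s^n)\cap J_f$ for every $n$ (via $w_n=s_nw_{n-1}$ with $s_n\in W(s)\cap L_{w_{n-1}s^{n-1}}$), and only then applies Lemma~\ref{E-dense-J-lemma}(2) once $s^n\in H_e$. Your observation that $s\rho s^n$ --- obtained by iterating Lemma~\ref{rho-mult-lemma}, or from Corollary~\ref{semilattice-cor} together with Theorem~\ref{semilattice-theorem} --- packages exactly this transfer, since the paper's induction is in effect a re-proof of the special case $t=s^{n-1}$ of Lemma~\ref{rho-mult-lemma}; your version reuses that lemma instead of redoing the work. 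Your existence step (the group inverse $g^{-1}\in W(g)\cap D_e$, pulled back to $W(s)$ via $s\rho g$) replaces the paper's explicit weak inverse $s^n(s^{n+1})^{-1}$ of $s$ in $H_e$. The divergence is greatest in the ``moreover'' clause: the paper reduces to regular $s,t$ and then moves weak inverses between $\mathcal{J}$-classes by hand using $\mathcal{L}$- and $\mathcal{R}$-translations, whereas you invoke the containment $\mathcal{J}\subseteq\rho$ (established at the start of the proof of Lemma~\ref{rho-equals-D-lemma}, which precedes this lemma, so there is no circularity) and finish with the transitivity chain $s\,\rho\,g\,\rho\,e\,\rho\,f\,\rho\,h\,\rho\,t$. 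What each buys: your argument exhibits the lemma as essentially a corollary of the $\rho$-machinery already developed and is considerably shorter, at the minor stylistic cost of citing a fact proved inside an earlier proof rather than stated as a result (harmless here, and re-derivable in one line from Lemma~\ref{rho-mult-lemma}); the paper's argument is more self-contained at this point and produces concrete weak inverses of every power $s^n$ in the given $\mathcal{J}$-class, though that extra information is not used elsewhere.
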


\begin{proof}
Let $S$ be a semigroup satisfying the conditions stated. As $S$ is eventually regular and every regular element lies in a group then $S$ is group-bound. Let $s \in K_e$ for some idempotent $e$ so that there exists $n\in\n$ such that $s^n\in H_e$. Then
$$
(s^n(s^{n+1})^{-1})s(s^n(s^{n+1})^{-1}) = s^n(s^{n+1})^{-1}e = s^n(s^{n+1})^{-1}
$$
where $(s^{n+1})^{-1}$ is the inverse of $s^{n+1}$ in $H_e$. Therefore $s$ has a weak inverse in $H_e$ and hence in $J_e$.

Now let $s_1\in W(s)$ and notice that $s_1$ is regular and so lies in a group $H_f$, say. By Lemma \ref{D-class-lemma} every $\mathcal{H}$-class of $J_f$ contains a weak inverse of $s$. Let $s_2$ be a weak inverse of $s$ such that $s_2 \mathcal{L} s_1s$ and note that $s_2\in D_f=J_f$. Then as $s_2s_1s = s_2$ we have
$$
s_2s_1s^2s_2s_1 = s_2ss_2s_1 = s_2s_1,
$$
so $s_2s_1 \in W(s^2)$ and by Lemma \ref{D-class-lemma}, $s_2s_1 \in J_f$. Let us denote $w_2=s_2s_1$ so that we proceed inductively as follows. Suppose that for $n\ge 2$ we have $w_{n-1}\in W(s^{n-1})\cap J_f$. Let $s_n\in W(s)\cap L_{w_{n-1}s^{n-1}}$ and let $w_n=s_nw_{n-1}$, so that $w_ns^{n-1}=s_n$ and
$$
w_ns^nw_n = s_nsw_n = s_nss_nw_{n-1} = s_nw_{n-1}=w_n.
$$
Hence $w_n\in W(s^n)\cap J_f$.

\smallskip

We see then that there is a weak inverse of $s^n$ in $J_f$ for any $n \in \n$. In particular, since $s\in K_e$, we can choose $n$ large enough such that $s^n \in H_e\subseteq J_e$. Let $s^\ast$ be the associated weak inverse of $s^n$ in $J_f$. Then by Lemma \ref{E-dense-J-lemma} we have $J_f = J_{s^\ast} \leq J_{s^n} = J_e$. Consequently if $s \in K_e$ then $J_e$ is the greatest $\mathcal{J}$-class containing a weak inverse of $s$.

\smallskip

Now let $s \in K_e$ and $t \in K_f$ as in the statement of the lemma. We can assume that $s$ and $t$ are regular. To see this, let $n\in\n$ be the minimum value such that $s^n\in H_e$ and note that if $(s^n)'$ is a weak inverse of $s^n$ then $s^{n-1}(s^n)'$ is a weak inverse of $s$ with $s^{n-1}(s^n)' \mathcal{L} (s^n)'$. This, along with the previous argument, shows that $s$ has a weak inverse in a ${\cal J}-$class $J$ if and only if the regular element $s^n$ has a weak inverse in $J$.

Let $J$ be a ${\cal J}-$class containing a weak inverse $s'$ of $s$. If $t \mathcal{L} s$ then $ts' \mathcal{L} ss'$ and so $ts' \in J$. Then, since $J$ is regular, there exists $r \in J$ such that $ts'r \in J$ is an idempotent, and so $s'rts'r \in J$ is a weak inverse of $t$. By a similar argument if $t \mathcal{R} s$ there is a weak inverse of $t$ in $J$ and so if $s \mathcal{J} t$ there is a weak inverse of $t$ in $J$. A dual argument then gives the opposite direction, and the result follows from the definition of $\rho$.
\end{proof}

Note that each $\mathcal{H}$-class of $S$ contains at most one weak inverse of $s$: if $s', s^\ast  \in W(s)$ with $s' \mathcal{H} s^\ast $ then $s's \mathcal{R} s' \mathcal{R} s^\ast  \mathcal{R} s^\ast s$. As $\mathcal{L}$ is a right congruence we also have $s's \mathcal{L} s^\ast s$. Since $s's$ and $s^\ast s$ are idempotents it follows that $s's = s^\ast s$ and by a similar argument $ss' = ss^\ast $. Then $s' = s'ss' = s^\ast ss' = s^\ast ss^\ast  = s^\ast $.

\begin{theorem}\label{KJe-semilattice-prop}
Let $S$ be a conditionally completely regular semigroup. If $S$ is group-bound then $S$ is a semilattice of Archimedean semigroups of the form $K_{J_e} = \bigcup_{f \in E(J_e)} K_f$ for $e\in E(S)$.
\end{theorem}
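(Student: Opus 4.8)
The plan is to use the congruence $\rho$ from Theorem~\ref{semilattice-theorem} as the underlying semilattice, and then to identify its classes with the sets $K_{J_e}$ and verify that each such class is Archimedean. Since $S$ is group-bound, every $s\in S$ satisfies $s^n\in H_e$ for some idempotent $e$ and some $n$, so $s\in K_e$; thus the sets $K_e$ partition $S$, and because a group-bound semigroup has $\mathcal{D}=\mathcal{J}$, each regular $\mathcal{J}$-class $J_e$ is a regular $\mathcal{D}$-class. By Theorem~\ref{semilattice-theorem}, $S/\rho$ is a semilattice, so $S$ is already a semilattice of its $\rho$-classes and each $\rho$-class is a subsemigroup; the whole task is therefore to identify those classes and to check the Archimedean condition inside each.

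First I would identify the $\rho$-class of a given $s\in K_e$ with $K_{J_e}=\bigcup_{f\in E(J_e)}K_f$. The inclusion $K_{J_e}\subseteq s\rho$ is precisely Lemma~\ref{J-lattice-lemma}: if $f\mathcal{J}e$, $s\in K_e$ and $t\in K_f$, then $(s,t)\in\rho$. For the reverse inclusion I would exploit the fact that, since $\mathcal{D}=\mathcal{J}$, the relation $\rho$ is determined by the set of $\mathcal{J}$-classes meeting $W(s)$, and that Lemma~\ref{J-lattice-lemma} tells us this set has a greatest element, namely $J_e$. Hence ``the greatest $\mathcal{J}$-class meeting $W(\cdot)$'' is a $\rho$-invariant: if $t\rho s$ with $t\in K_g$, then this greatest class is simultaneously $J_g$ and $J_e$, forcing $g\mathcal{J}e$, so $g\in E(J_e)$ and $t\in K_g\subseteq K_{J_e}$. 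This shows $s\rho=K_{J_e}$, so the $\rho$-classes are exactly the sets $K_{J_e}$, indexed by the $\mathcal{J}$-classes of idempotents.

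It then remains to show each $K_{J_e}$ is Archimedean. Writing $T=K_{J_e}$, which is a subsemigroup, note that $J_e=D_e$ is a regular $\mathcal{D}$-class, hence a completely simple (in particular simple) subsemigroup by Lemma~\ref{D-class-lemma}(1). Given $a,b\in T$, group-boundedness supplies $m,k$ with $a^m,b^k\in J_e$, and simplicity of $J_e$ gives $J_e\,b^k\,J_e=J_e$, so $a^m=u\,b^k\,v$ for some $u,v\in J_e\subseteq T$. Since $T\,b^k\,T=T\,b^{k-1}\,b\,T\subseteq T\,b\,T$, we obtain $a^m\in T\,b\,T$, which is the Archimedean condition for $T$. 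Combining this with the semilattice structure of $S/\rho$ from Theorem~\ref{semilattice-theorem} yields that $S$ is a semilattice of the Archimedean semigroups $K_{J_e}$.

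I expect the main obstacle to be the second inclusion in the identification of the $\rho$-classes: one must argue that membership of $K_{J_e}$ is detected purely by the set of $\mathcal{J}$-classes meeting the weak-inverse set, and this hinges on the uniqueness of the greatest such class provided by Lemma~\ref{J-lattice-lemma} together with the collapse $\mathcal{D}=\mathcal{J}$ in the group-bound setting. The Archimedean verification is by comparison a short consequence of the complete simplicity of the regular $\mathcal{D}$-classes.
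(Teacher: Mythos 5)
Your proposal is correct and follows essentially the same route as the paper: you identify the $\rho$-classes with the sets $K_{J_e}$ by applying Lemma~\ref{J-lattice-lemma} in both directions (the ``greatest $\mathcal{J}$-class meeting $W(\cdot)$'' invariant, together with $\mathcal{D}=\mathcal{J}$ for group-bound semigroups, is exactly how the paper argues the reverse inclusion), and then deduce the semilattice decomposition from Theorem~\ref{semilattice-theorem}. Your Archimedean verification via complete simplicity of $J_e$ and absorbing $b^{k-1}$ into $K_{J_e}$ is also the paper's argument, just written out in slightly more detail.
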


\begin{proof}
Let $e\in E(S)$ and define $K_{J_e} = \bigcup_{f \in E(J_e)} K_f$.
Let $s,t \in K_{J_e}$ and notice that $s\in K_f, t\in K_g$ for some $f,g\in J_e$, so that by Lemma~\ref{J-lattice-lemma}, $(s,t)\in\rho$. 
Conversely, if $(s,t)\in\rho$ then there exists $e,f\in E(S)$ such that $s\in K_e\subseteq K_{J_e}, t\in K_f\subseteq K_{J_f}$. By Lemma~\ref{J-lattice-lemma}, $J_e$ is the greatest ${\cal J}-$class containing a weak inverse of $s$ and $J_f$ is the greatest ${\cal J}-$class containing a weak inverse of $t$. Since $(s,t)\in\rho$ it easily follows that $J_e=J_f$ and so $s,t \in K_{J_e}=K_{J_f}$. Hence the sets $K_{J_e}$ are the $\rho-$classes and so partition $S$ and since $S/\rho$ is a semilattice then the result follows.

\smallskip

For each $e,f \in E(S)$ it follows that there exists $g\in E(S)$ such that $K_{J_e}K_{J_f}\subseteq K_{J_g}$. Since $e\in K_{J_e}$ and $f\in K_{J_f}$ then $ef\in K_{J_g}$. In addition there exists a uniquely determined $h\in E(S)$ such that $ef\in K_h\subseteq K_{J_h}$ and so $K_{J_g}=K_{J_h}$.

To see that $K_{J_e}$ is an Archimedean semigroup, let $a,b \in K_{J_e}$. Then there exist $m,n \in \n$ such that $a^m, b^n \in J_e$ and so $a^m \in K_{J_e} b^n K_{J_e} \subseteq K_{J_e} b K_{J_e}$ as required.
\end{proof}

Note that a decomposition into a semilattice of Archimedean semigroups is necessarily unique: Let $S=[Y;S_\alpha] = [Y';S_a]$ be two Archimedean semilattice decompositions of the semigroup $S$. If $s,t\in S$ lie in the same subsemigroup $S_\alpha$ where $\alpha\in Y$ and $s \in S_a, t \in S_b$ where $a,b\in Y'$, then there exist $n \in \n$ and $u,v \in S$ such that $s^n = utv$ and $a \leq b$. Similarly $b \leq a$ and so $s,t \in S_a$ and the two semilattices, $Y$ and $Y'$, are isomorphic. We have hence recovered the same decomposition as Shevrin (Theorem~\ref{shevrin-theorem-3}) in this case.

\smallskip

It is clear from the above structure that these semigroups are group-bound and since it is straightforward to check that $\Reg(K_{J_e}) = J_e$, then the regular elements form a completely simple subsemigroup.

\smallskip

The converse of Theorem~\ref{KJe-semilattice-prop} does not hold in general as an Archimedean semigroup need not contain regular elements and hence a semilattice of Archimedean semigroups may not be group-bound. It is enough, however, to require that each Archimedean semigroup contains a regular element.

\begin{corollary} \label{KJe-semilattie-cor}
Let $S$ be a conditionally completely regular semigroup. Then $S$ is group-bound if and only if $S=[Y;S_\alpha]$ is a semilattice of Archimedean semigroups $S_\alpha$ with $\Reg(S_\alpha) \neq \emptyset$.
\end{corollary}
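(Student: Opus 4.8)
The forward implication is essentially a restatement of Theorem~\ref{KJe-semilattice-prop}: if $S$ is group-bound then it is the semilattice of the Archimedean semigroups $K_{J_e}$, and since $e\in J_e$ we have $\Reg(K_{J_e})=J_e\neq\emptyset$, so each component contains a regular element. For the converse, suppose $S=[Y;S_\alpha]$ is a semilattice of Archimedean semigroups each containing a regular element. Since group-boundness is a condition on individual elements and every subgroup of a component is a subgroup of $S$, the plan is to reduce to a single component and show that each $T:=S_\alpha$ is group-bound. First I would note that $T$ inherits conditional complete regularity: a regular element of $T$ is regular in $S$, and the subgroup of $S$ containing it lies in a single $\mathcal{J}$-class of $S$, hence inside the semilattice component $T$, so it is a subgroup of $T$.

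Fix such a component $T$, choose a regular element, and let $e$ be the idempotent of the subgroup of $T$ containing it. The key structural step is to identify the kernel of $T$. Since $T$ is Archimedean, for every $x\in T$ we have $e=e^{m}\in TxT$ for some $m$, so $J_e\leq J_x$; hence $J_e$ is the least $\mathcal{J}$-class of $T$ and $K:=T^{1}eT^{1}=J_e$ is the (simple) minimal ideal of $T$. Applying the Archimedean condition again, now to $x$ and $e$, gives $x^{n}\in TeT\subseteq K$ for some $n$, so \emph{every} element of $T$ has a power in $K$. It therefore remains only to show that $K$ is completely simple, for then $K$ is a union of its maximal subgroups and the power $x^{n}\in K$ lies in a subgroup, making $T$, and hence $S$, group-bound.

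To prove $K$ completely simple I would argue as follows. As a full $\mathcal{J}$-class of $T$, the kernel $K$ inherits conditional complete regularity from $T$, since the subgroup of $T$ containing a regular element of $K$ is the $\mathcal{H}$-class of that element, which lies in $J_e=K$. Because $K$ is simple, $\mathcal{J}$ is universal on $K$, and since all of Green's relations are contained in $\rho$ (as established in the proof of Lemma~\ref{rho-equals-D-lemma}), $K$ consists of a single $\rho$-class. By the remarks following Lemma~\ref{rho-equals-D-lemma}, the regular elements of this single $\rho$-class form a single $\mathcal{D}$-class, which is a completely simple subsemigroup by Lemma~\ref{D-class-lemma}(1), and moreover $K$ is $E$-dense. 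Lemma~\ref{regS-ideal-lemma} then shows that $\Reg(K)$ is an ideal of $K$; since $K$ is simple this forces $\Reg(K)=K$, so $K$ is completely simple, as required.

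The main obstacle is precisely this completely-simple step, and within it the passage from $\mathcal{J}$ to $\mathcal{D}$ on the kernel: simplicity only yields that all idempotents of $K$ are $\mathcal{J}$-related, whereas what is needed is that all regular elements lie in a single $\mathcal{D}$-class. This is exactly what the $\rho$-machinery of Lemmas~\ref{rho-equals-D-lemma}, \ref{D-class-lemma} and \ref{regS-ideal-lemma} is designed to deliver, and marrying it to the simplicity of the kernel is the crux; once $K$ is known to be completely simple the conclusion is immediate. Alternatively one could establish eventual regularity of $S$ directly from $x^{n}\in K$ and then invoke the equivalence, noted above, of eventual regularity and group-boundness for conditionally completely regular semigroups.
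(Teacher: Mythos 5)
Your proof is correct, but your converse is organised differently from the paper's, which is much more direct. The paper's converse is essentially three lines: fix $s \in S_\alpha$ and a regular $t \in S_\alpha$; the Archimedean property gives $s^n \in S_\alpha t S_\alpha$; and since Lemmas~\ref{e-dense-subsemigroup-lemma} and \ref{regS-ideal-lemma} make $\Reg(S_\alpha)$ an ideal of $S_\alpha$, it follows at once that $s^n \in \Reg(S_\alpha) \subseteq \Reg(S)$, whence $s^n$ lies in a subgroup by conditional complete regularity. You instead identify the kernel $K = J_e = T^1 e T^1$ of each component $T$, use the Archimedean property to put a power of every element into $K$, and then prove $K$ is completely simple by running the $\rho$-machinery on $K$ itself, with simplicity of $K$ forcing $\Reg(K) = K$. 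Both arguments exploit the same two lemmas; the difference is where the ideal property of the regular elements is applied --- at the level of the whole component (paper) versus at the level of its kernel (you). The paper's version is shorter, but note that invoking Lemma~\ref{e-dense-subsemigroup-lemma} and the complete simplicity of $\Reg(S_\alpha)$ tacitly requires knowing that $S_\alpha$ lies in a single $\rho$-class, which needs an argument of exactly the kind you make explicit (in your case it is free: simplicity makes $\mathcal{J}$, hence $\rho$, universal on $K$). Your detour also buys a stronger structural conclusion --- each component is a nil-extension of a completely simple semigroup --- which anticipates the paper's subsequent description of the $K_{J_e}$ as ideal extensions of completely simple semigroups by nilsemigroups, and your verification that conditional complete regularity passes to components and to the kernel fills in inheritance steps the paper leaves implicit.
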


\begin{proof}
Clearly if $S$ is group-bound then every subsemigroup contains a regular element. Conversely, let $s \in S$. Then $s \in S_\alpha$ for some $\alpha$ and let $t \in \Reg(S_\alpha)$. Since $S_\alpha$ is an Archimedean semigroup, there exists $n \in \n$ such that $s^n \in S_\alpha t S_\alpha$ and hence $s^n \in \Reg(S_\alpha) \subseteq \Reg(S)$ by Lemmas~\ref{e-dense-subsemigroup-lemma} and \ref{regS-ideal-lemma}.
\end{proof}

\begin{proposition}\label{group-bound-proposition}
Let $S$ be a semigroup. Any two of the following implies the third.
\begin{enumerate}
\item $S$ is group-bound;
\item $S$ is conditionally completely regular;
\item $S$ is a semilattice of Archimedean semigroups $S_\alpha$ with $\Reg(S_\alpha) \neq \emptyset$.
\end{enumerate}
\end{proposition}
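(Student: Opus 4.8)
The plan is to observe that this proposition is almost entirely a repackaging of three results already established, one for each of the three pairwise implications, so the proof should reduce to identifying which earlier statement delivers each case and checking that the hypotheses match up correctly.

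First I would handle (1) and (2) $\Rightarrow$ (3). This is precisely Theorem~\ref{KJe-semilattice-prop}: a group-bound conditionally completely regular semigroup is a semilattice of the Archimedean semigroups $K_{J_e}$. The only thing to add is that these blocks contain regular elements, and this was already recorded in the remark following that theorem, namely $\Reg(K_{J_e}) = J_e \neq \emptyset$, so the full strength of (3) holds.

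Next, for (2) and (3) $\Rightarrow$ (1), I would invoke Corollary~\ref{KJe-semilattie-cor}, which asserts that for a conditionally completely regular semigroup being group-bound is equivalent to being a semilattice of Archimedean semigroups each containing a regular element. Under hypothesis (2) this equivalence is available, and (3) supplies its right-hand side, so (1) follows immediately with no further work.

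The remaining and genuinely new case is (1) and (3) $\Rightarrow$ (2), and this is where I expect the only real subtlety to lie. The key observation is that hypothesis (3) in particular asserts that $S$ is a semilattice of Archimedean semigroups, the extra condition $\Reg(S_\alpha)\neq\emptyset$ playing no role here and being safely discarded. Combining this with the group-boundedness from (1), Shevrin's Theorem~\ref{shevrin-theorem-3} applies directly: a group-bound semigroup is a semilattice of Archimedean semigroups if and only if every regular $\mathcal{H}$-class is a group. Hence every regular $\mathcal{H}$-class of $S$ is a group, which is exactly the statement that $S$ is conditionally completely regular, giving (2). The main point to verify carefully is that the hypotheses of Shevrin's theorem are met using nothing more than (1) together with the semilattice-of-Archimedean part of (3), so that no circular dependence on Theorem~\ref{KJe-semilattice-prop} or Corollary~\ref{KJe-semilattie-cor} is introduced into this direction.
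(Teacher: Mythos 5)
Your proposal is correct and matches the paper's proof essentially verbatim: the paper likewise derives (1)\,\&\,(2) $\Rightarrow$ (3) and (2)\,\&\,(3) $\Rightarrow$ (1) from Corollary~\ref{KJe-semilattie-cor} (which packages Theorem~\ref{KJe-semilattice-prop} together with the observation $\Reg(K_{J_e})=J_e$), and gets (1)\,\&\,(3) $\Rightarrow$ (2) from Theorem~\ref{shevrin-theorem-3}. Your extra care about discarding the condition $\Reg(S_\alpha)\neq\emptyset$ in the last case and avoiding circularity is sound, though the paper does not spell it out.
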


\begin{proof}
By Corollary~\ref{KJe-semilattie-cor} we have (1) and (2) imply (3) and (2) and (3) imply (1). The remaining implication follows from Theorem~\ref{shevrin-theorem-3}.
\end{proof}

\medskip

We now turn our attention to describing the subsemigroups $K_{J_e}$ at each vertex of the semilattice. Since each semigroup contains regular elements, they are all stratified extensions with a base consisting of at least the regular elements. From \cite[Proposition 3]{shevrin-95} each $K_{J_e}$ is an ideal extension of the completely simple semigroup $J_e$ by a nilsemigroup. If this nilsemigroup is stratified then $K_{J_e}$ is a nil-stratified extension with base $J_e$. However not every nilsemigroup is stratified as demonstrated by the following example.

\medskip

Let $S={\cal P}(\n)$ be the set of subsets of $\n$ and define a multiplication on $S$ by
$$
A\circ B = \begin{cases}A\cup B&\text{if }A\text{ and }B\text{ are non-empty and }A\cap B=\emptyset\\\emptyset&\text{otherwise.}\end{cases}
$$
Then it is clear that each element is nilpotent of index 2 and so $S$ is a nilsemigroup, but all infinite elements are in the base. Hence $S$ is not stratified.

\begin{lemma}
Let $S$ be an eventually regular semigroup such that $\Reg(S)$ is completely simple and suppose $S$ is a finitely stratified extension. Then $\Base(S) \setminus \Reg(S)$ is either empty or infinite.
\end{lemma}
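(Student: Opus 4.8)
The plan is to argue by contradiction: assume $\Base(S)\setminus\Reg(S)$ is non-empty and finite, and then produce a regular element inside it. Write $R=\Reg(S)$ and $T=\Base(S)$. First I would assemble the two structural facts I need. Since $S$ is eventually regular it is $E$-dense, and $R$ is completely simple by hypothesis, so Lemma~\ref{regS-ideal-lemma} tells me that $R$ is an ideal of $S$; recall also that $R\subseteq T$. Second, because $S$ is a finitely stratified extension its base is globally idempotent, so $T^2=T$ and hence $T^n=T$ for every $n\ge 1$.

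Now suppose $T\setminus R$ is finite and non-empty, say $|T\setminus R|=k\ge 1$, and fix $t\in T\setminus R$. Using $T=T^{k+1}$ I would write $t=c_1c_2\cdots c_{k+1}$ with each $c_i\in T$. The crucial observation is that because $R$ is an ideal, no partial product $p_j=c_1\cdots c_j$ of this expression can lie in $R$: for $j\le k$, if $p_j\in R$ then $t=p_j(c_{j+1}\cdots c_{k+1})\in R$, contrary to $t\notin R$, while $p_{k+1}=t\notin R$ by choice. Since each $p_j$ is a product of elements of the subsemigroup $T$, it lies in $T$, and by the preceding remark $p_j\in T\setminus R$ for all $j=1,\dots,k+1$.

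With $k+1$ partial products taking values in the $k$-element set $T\setminus R$, the pigeonhole principle forces $p_a=p_b$ for some $a<b$. Setting $w=c_{a+1}\cdots c_b\in T$ this gives $p_a=p_aw$, whence $p_a=p_aw^j$ for all $j\ge 1$. Finally, since $S$ is eventually regular there is $r$ with $w^r\in R$, and as $R$ is an ideal $p_a=p_aw^r\in R$, contradicting $p_a\in T\setminus R$. Thus $T\setminus R$ cannot be finite and non-empty.

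The one point that needs care is ensuring the partial products stay inside the \emph{finite} set $T\setminus R$ rather than merely inside $S\setminus R$; this is exactly why the global idempotency of the base (factorisation into base elements) must be used, rather than the weaker factorisation in $S$ coming from $t\in\bigcap_m S^m$, whose factors and partial products need not lie in $T$. Once the partial products are confined to $T\setminus R$, the pigeonhole step together with the eventual regularity of $w$ closes the argument.
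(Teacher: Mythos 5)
Your proof is correct. It uses the same three structural ingredients as the paper — $\Reg(S)$ is an ideal by Lemma~\ref{regS-ideal-lemma} (via $E$-density), the base of a finitely stratified extension is globally idempotent, and eventual regularity turns any relation of the form $p=pw$ into regularity of $p$ (since $p=pw^j$ for all $j$ and some $w^r$ is regular) — but the architecture is genuinely different. The paper argues directly: starting from $s_0\in\Base(S)\setminus\Reg(S)$ it repeatedly factors $s_{i-1}=s_it_i$ inside the base, using the ideal property and eventual regularity to show each new $s_i$ is non-regular and distinct from all its predecessors, thereby exhibiting an explicit infinite subset. You instead argue by contradiction: assuming $|\Base(S)\setminus\Reg(S)|=k$, you factor a single element into $k+1$ base factors, confine all partial products to the $k$-element set $\Base(S)\setminus\Reg(S)$, and let the pigeonhole principle produce the repetition $p_a=p_aw$ that the paper obtains inductively. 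Your version buys a cleaner uniform treatment of the distinctness issue — the paper's ``proceeding inductively'' quietly requires checking that $s_k$ differs from \emph{every} earlier $s_j$, which amounts to the same $s_j=s_j(t_k\cdots t_{j+1})$ trick you apply once — at the cost of being non-constructive, whereas the paper's chain $s_0,s_1,s_2,\dots$ explicitly witnesses the infinitude. Your closing observation, that the factorisation must take place in $\Base(S)$ (via $T=T^{k+1}$) rather than merely in $S^{k+1}$ so that the pigeonhole set is finite, is exactly the point at which global idempotency is indispensable, and it mirrors the paper's reliance on the same fact.
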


\begin{proof}
Suppose $s_0\in\Base(S) \setminus \Reg(S) \neq \emptyset$. Since $S$ is a finitely stratified extension, $\Base(S)$ is a globally idempotent subsemigroup so $s_0=s_{1}t_{1}$ for some $s_{1},t_{1} \in \Base(S)$. If $s_{1}$ is regular then as $\Reg(S)$ is an ideal, $s_0$ is regular giving a contradiction. Further, if $s_{1}=s_0$ then $s_0=s_0t_{1}=s_0{t_{1}}^n$ for any $n\in\n$. We can choose $n$ such that ${t_1}^n$ is regular, so $s_0$ is again regular giving a contradiction. Hence $s_{1}$ is an element of $\Base(S) \setminus \Reg(S)$ not equal to $s_0$. By a similar argument, $s_{1} = s_{2}t_{2}$ where $s_{2}\in \Base(S) \setminus \Reg(S)$ and $s_2$ is not equal to $s_0$ nor $s_{1}$. Proceeding inductively we deduce that the set $\{s_0, s_1, s_2, \dots \}$ is an infinite subset of $\Base(S) \setminus \Reg(S)$.
\end{proof}

It follows that any finite semigroup in which  every regular $\mathcal{H}$-class contains an idempotent
is a semilattice of finitely stratified extensions with completely simple bases.

\begin{theorem}
A semigroup $S$ is a finite conditionally completely regular semigroup if and only if $S=[Y;S_\alpha]$ is a finite semilattice of finite semigroups $S_\alpha$ where each $S_\alpha$ is a finitely stratified extension of a completely simple semigroup.
\end{theorem}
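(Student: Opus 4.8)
The plan is to read this as an equivalence that consolidates the results established immediately above, so I would prove the two implications separately, putting almost all of the work into the converse. The forward direction is essentially the bookkeeping statement made just before the theorem, while the converse requires a genuine check that conditional complete regularity is inherited from the local bases.

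For the forward direction, suppose $S$ is a finite conditionally completely regular semigroup. Being finite, $S$ is automatically group-bound, so Theorem~\ref{KJe-semilattice-prop} applies and exhibits $S$ as the semilattice $[S/\rho; K_{J_e}]$ of the Archimedean subsemigroups $K_{J_e}$. Finiteness of $S$ forces $S/\rho$ to be a finite semilattice and each $K_{J_e}$ to be a finite semigroup; since the descending chain $K_{J_e}\supseteq K_{J_e}^2\supseteq\cdots$ must stabilise in a finite semigroup, each $K_{J_e}$ is a finitely stratified extension. As already noted, $\Reg(K_{J_e})=J_e$ is completely simple, so the preceding lemma (applied to $K_{J_e}$, which is eventually regular with completely simple regular part and is a finitely stratified extension) shows $\Base(K_{J_e})\setminus\Reg(K_{J_e})$ is either empty or infinite; finiteness rules out the latter, whence $\Base(K_{J_e})=J_e$. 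Thus each $K_{J_e}$ is a finite finitely stratified extension of the completely simple semigroup $J_e$, which is precisely the required decomposition.

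For the converse, suppose $S=[Y;S_\alpha]$ is a finite semilattice of finite semigroups, each $S_\alpha$ being a finitely stratified extension of a completely simple base $\Base(S_\alpha)$. Then $S$ is a finite union of finite semigroups and hence finite, so it remains only to verify that every regular $\mathcal{H}$-class of $S$ contains an idempotent. The first step is to locate the regular elements: if $s\in S_\alpha$ is regular in $S$, say $s=sxs$ with $x\in S_\beta$, then $s=sxs\in S_{\alpha\beta}$ forces $\alpha\beta=\alpha$, and the standard pseudo-inverse $y=xsx$ then lies in $S_{\beta\alpha\beta}=S_\alpha$ and is an inverse of $s$; hence $s$ is regular within $S_\alpha$, so $\Reg(S)\cap S_\alpha=\Reg(S_\alpha)=\Base(S_\alpha)$ by the corollary to Lemma~\ref{lemma-2-1} together with the fact that a completely simple semigroup consists of regular elements. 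Now let $H$ be a regular $\mathcal{H}$-class of $S$ and pick a regular $s\in H$; then $s\in\Base(S_\alpha)$ for some $\alpha$, and since $\Base(S_\alpha)$ is completely simple, $s$ lies in a subgroup of $\Base(S_\alpha)$ whose identity $e$ satisfies $e\,\mathcal{H}\,s$ inside $\Base(S_\alpha)$. As Green's $\mathcal{H}$-relation computed in a subsemigroup is contained in that of the overlying semigroup, $e\,\mathcal{H}\,s$ in $S$ as well, so $e\in H$ and $H$ contains an idempotent.

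I expect the main obstacle to be the converse, and specifically the bookkeeping with Green's relations across the two levels: one must verify both that an element regular in $S$ and lying in $S_\alpha$ is already regular in $S_\alpha$ (hence in its completely simple base), and that the idempotent witnessing the group structure in $\Base(S_\alpha)$ remains $\mathcal{H}$-related to $s$ in the whole of $S$. The first of these rests on the semilattice order forcing $\alpha\le\beta$, and the second on the containment of local Green's relations in global ones; once both are in place, the completely simple structure of each base does the rest. As an alternative to the direct $\mathcal{H}$-class argument, the converse could instead be obtained by checking that each $S_\alpha$ is Archimedean with nonempty regular part and then invoking Proposition~\ref{group-bound-proposition} together with the fact that $S$, being finite, is group-bound, but the direct argument seems cleaner.
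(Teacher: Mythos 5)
Your proposal is correct and follows essentially the same route as the paper: the forward direction is exactly the consolidation of Theorem~\ref{KJe-semilattice-prop} and the preceding lemma (finiteness forcing the power chain to stabilise and ruling out the infinite case, so $\Base(K_{J_e})=J_e$), which the paper records in the remark just before the theorem, and your converse is the paper's own argument — localise regularity to $S_\alpha$ via the semilattice order (the paper takes an inverse $s'\in S_\beta$ directly where you construct $y=xsx$, a trivial variant), place $s$ in the completely simple base, and transfer the $\mathcal{H}$-related idempotent up to $S$. No gaps; your explicit note that Green's relations of a subsemigroup are contained in those of $S$ just makes precise a step the paper leaves implicit.
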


\begin{proof}
To see that the converse is true, let $s\in S$ be a regular element, so that there exists $\alpha$ such that $s\in S_\alpha$. Let $s'$ be an inverse of $s$ (within $S$) with $s'\in S_\beta$ for some $\beta$. Then $s = ss's\in S_\alpha S_\beta S_\alpha \subseteq S_{\alpha\beta}\cap S_\alpha$, and so $S_\alpha = S_{\alpha\beta}$. Similarly $s'=s'ss'\in S_{\alpha\beta}\cap S_\beta$ and so $S_\alpha = S_\beta$. It follows that $s$ is regular within $S_\alpha$ and so $s\in \Base(S_\alpha)$ and is therefore ${\cal H}-$related to an idempotent as required.
\end{proof}

\section{Strict extensions of Clifford Semigroups}

In this section we continue our exploration of examples by looking at Clifford semigroups. In particular, we show that every strict extension of a Clifford semigroup can be decomposed as a semilattice of stratified extensions of groups.

\smallskip

We make use of the notation of Clifford and Preston \cite[Section 4.4]{clifford-preston-61}, and in particular that relating to ideal extensions determined by partial homomorphisms. A Clifford semigroup is a completely regular inverse semigroup. It is well known that a Clifford semigroup $S$ decomposes as a semilattice of groups $S={\cal S}[Y;G_\alpha]$. We begin by showing that a strict extension $\Sigma$ of a Clifford semigroup $S$ has a semilattice structure isomorphic to that of the Clifford semigroup itself.

\begin{lemma}
Let $S={\cal S}[Y;G_\alpha]$ be a Clifford semigroup. An ideal extension of $S$ is strict if and only if it is determined by a partial homomorphism.
\end{lemma}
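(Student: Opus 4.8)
The plan is to reduce both implications to results already in hand, so that the only genuine computation is a structural fact about Clifford semigroups. For the backward implication there is essentially nothing to do: recall from the construction immediately following the statement of Theorem~\ref{grillet-petrich-theorem} that any extension built from a partial homomorphism $A \mapsto \overline{A}$ is automatically strict, since by clauses (2) and (3) of that multiplication each $A \in \Sigma \setminus S$ acts on $S$ exactly as the element $\overline{A} \in S$ does. Thus an extension determined by a partial homomorphism is strict, and this direction requires no further argument.

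For the forward implication the strategy is to invoke Theorem~\ref{grillet-petrich-theorem}, which tells us that every strict extension of a \emph{weakly reductive} semigroup is determined by a partial homomorphism. Hence it suffices to verify that a Clifford semigroup $S = {\cal S}[Y;G_\alpha]$ is weakly reductive, and the lemma then follows at once.

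To establish weak reductivity I would argue directly from the semilattice-of-groups structure. Suppose $a,b \in S$ are interchangeable and write $a \in G_\alpha$, $b \in G_\beta$, with $e_\alpha, e_\beta$ the group identities (equivalently the idempotents of $Y$). Since the idempotents of a Clifford semigroup are central and serve as local identities, I would feed the well-chosen test elements $x = e_\alpha$ and $x = e_\beta$ into the defining relations $xa = xb$ and $ax = bx$. From $e_\alpha a = e_\alpha b$ and $e_\alpha a = a$ we get $a = e_\alpha b \in G_{\alpha\beta}$; as the groups $G_\gamma$ are pairwise disjoint, $a \in G_\alpha \cap G_{\alpha\beta}$ forces $\alpha = \alpha\beta$, i.e. $\alpha \le \beta$. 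The symmetric use of $a e_\beta = b e_\beta = b$ gives $b \in G_{\alpha\beta}$ and $\beta \le \alpha$, whence $\alpha = \beta$; the relation $e_\alpha a = e_\alpha b$ then collapses to $a = b$. So $S$ has no distinct interchangeable elements and is weakly reductive.

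The only real content lies in this weak-reductivity computation, and it is short precisely because the structure is so rigid: the central idempotents $e_\alpha$ act as identities that simultaneously pin down the component $\alpha$ (via the disjointness of the $G_\gamma$) and the element itself. I anticipate no serious obstacle here; the point to handle carefully is simply confirming that $e_\alpha b$ and $a e_\beta$ land in $G_{\alpha\beta}$ and using disjointness of the groups to read off the order relation on $Y$.
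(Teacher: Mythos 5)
Your proposal is correct and follows essentially the same route as the paper: both reduce the lemma to Theorem~\ref{grillet-petrich-theorem} by verifying that a Clifford semigroup is weakly reductive, and both verify weak reductivity by testing interchangeable elements $a \in G_\alpha$, $b \in G_\beta$ against the idempotents $e_\alpha$, $e_\beta$ to force $\alpha = \beta$ and then $a = b$. Nothing is missing; your explicit appeal to the disjointness of the $G_\gamma$ and to clauses (2) and (3) of the partial-homomorphism multiplication for the converse direction merely makes explicit what the paper leaves implicit.
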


\begin{proof}
Let $a,b \in S$ be such that $ax=bx$ and $xa=xb$ for all $x \in S$. As $S$ is a Clifford semigroup $a \in G_\alpha$ and $b \in G_\beta$ for some $\alpha, \beta \in Y$. Let $e,f$ be the identities of $G_\alpha, G_\beta$ respectively. Then $a = ea = eb$ and so $\alpha \leq \beta$. Similarly, $b = fb = fa$ so $\beta \leq \alpha$ and so $\alpha = \beta$ and $e=f$. Then $a=ea=eb=b$ and hence $S$ is weakly reductive. The result then follows from Theorem~\ref{grillet-petrich-theorem}.
\end{proof}

\begin{lemma}\label{Clifford-extension-lemma}
Let $\Sigma$ be a strict extension of a Clifford semigroup $S={\cal S}[Y;G_\alpha]$ by a semigroup $T$ defined by a partial homomorphism $A \mapsto \overline{A}$ and let $\Sigma_\alpha = G_\alpha \cup \{A \in T\setminus\{0\} | \overline{A} \in G_\alpha\}$ for each $\alpha \in Y$. Define a relation $\sim$ on $\Sigma$ by $s \sim t$ if and only if $s,t \in \Sigma_\alpha$ for some $\alpha \in Y$. Then $\sim$ is a congruence and $\Sigma/\!\!\sim$ is a semilattice isomorphic to $Y$.
\end{lemma}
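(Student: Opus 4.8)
The plan is to observe first that the subsets $\Sigma_\alpha$ partition $\Sigma$, so that $\sim$ is automatically an equivalence relation, and then to establish the single grading statement: if $x\in\Sigma_\alpha$ and $y\in\Sigma_\beta$ then $x\ast y\in\Sigma_{\alpha\beta}$. This one claim delivers everything at once. It shows $\sim$ is a congruence, since the class of $x\ast y$ depends only on the classes of $x$ and $y$; and it shows that the natural map from $\Sigma/\!\!\sim$ to $Y$ sending the class $\Sigma_\alpha$ to $\alpha$ is a homomorphism. Since each $G_\alpha\subseteq\Sigma_\alpha$ is nonempty this map is onto, and it is injective by the definition of the classes, so $\Sigma/\!\!\sim$ is isomorphic to $Y$ as a semilattice.

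To see that the $\Sigma_\alpha$ partition $\Sigma$, note that every element of $\Sigma$ lies either in $S$ or in $T\setminus\{0\}$. Since $S={\cal S}[Y;G_\alpha]$ is a Clifford semigroup, the groups $G_\alpha$ partition $S$, so each $s\in S$ lies in exactly one $G_\alpha\subseteq\Sigma_\alpha$. For $A\in T\setminus\{0\}$, the element $\overline{A}\in S$ lies in exactly one $G_\alpha$, so $A$ lies in exactly one $\Sigma_\alpha$. Thus each element of $\Sigma$ has a well-defined value $\alpha\in Y$, and $x\in\Sigma_\alpha$ precisely when this value is $\alpha$.

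The core of the argument is the grading claim, which I would verify by running through the four multiplication rules (1)--(4) defining the strict extension. Throughout I use two facts: that in a Clifford semigroup $G_\alpha G_\beta\subseteq G_{\alpha\beta}$, and that the partial homomorphism satisfies $\overline{AB}=\overline{A}\,\overline{B}$ whenever $AB\neq0$. Rules (2), (3) and (4) are immediate, as each reduces the product to an honest product in $S$ of an element of $G_\alpha$ with an element of $G_\beta$, which lands in $G_{\alpha\beta}\subseteq\Sigma_{\alpha\beta}$. Rule (1), the product $A\ast B$ of two elements of $T\setminus\{0\}$ with $\overline{A}\in G_\alpha$ and $\overline{B}\in G_\beta$, splits into two subcases. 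If $AB=0$ then $A\ast B=\overline{A}\,\overline{B}\in G_{\alpha\beta}\subseteq\Sigma_{\alpha\beta}$. If $AB\neq0$ then $A\ast B=AB\in T\setminus\{0\}$, and the partial homomorphism gives $\overline{AB}=\overline{A}\,\overline{B}\in G_{\alpha\beta}$, so $AB\in\Sigma_{\alpha\beta}$ by the definition of $\Sigma_{\alpha\beta}$.

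The only subtlety, and hence the step I would treat most carefully, is this last subcase of rule (1): when $AB\neq0$ the product remains in $T\setminus\{0\}$, so to place it in the correct class one must invoke the partial homomorphism to compute $\overline{AB}$ rather than reading off the value directly. Every other case is a routine appeal to the fact that $S$ is a semilattice of groups.
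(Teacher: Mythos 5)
Your proposal is correct and is essentially the paper's own argument in different packaging: the paper defines $\theta:\Sigma\to Y$ by $\theta(s)=\alpha$ for $s\in\Sigma_\alpha$ and verifies $\theta(xy)=\theta(x)\theta(y)$ by the same case analysis on the four multiplication rules (with the same key step $\theta(AB)=\theta(\overline{AB})=\theta(\overline{A}\,\overline{B})$ when $AB\neq 0$), which is precisely your grading statement $\Sigma_\alpha\ast\Sigma_\beta\subseteq\Sigma_{\alpha\beta}$. Your explicit verification that the $\Sigma_\alpha$ partition $\Sigma$ merely spells out what the paper dismisses as ``clearly an equivalence relation,'' so there is no substantive difference.
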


\begin{proof}
Clearly $\sim$ is an equivalence relation. To prove $\sim$ is a congruence and that $\Sigma/\!\!\sim\; \cong Y$ we show that $\sim$ is the kernel of the homomorphism $\theta: \Sigma \rightarrow Y$ where if $s \in \Sigma_\alpha$ then $\theta(s) = \alpha$. Note that if $A \in T\setminus\{0\}$ then $\theta(A) = \theta(\overline{A})$. We have four cases to consider:
\begin{enumerate}
\item If $s,t \in S$ then $\theta(s)\theta(t) = \theta(st)$ follows from the semilattice structure of $S$.
\item If $s \in S$ and $A \in T\setminus\{0\}$ then $\theta(s)\theta(A) = \theta(s)\theta(\overline{A}) = \theta(s\overline{A}) = \theta(sA)$, where the last two equalities follow from the first case and multiplication in a strict extension respectively. The case for $\theta(A)\theta(s)$ follows similarly.
\item If $A,B \in T\setminus\{0\}$ then $\theta(A)\theta(B) = \theta(\overline{A})\theta(\overline{B}) = \theta(\overline{A}\;\overline{B})$ by the first case. Then if $AB = 0$ in $T$ we have $\theta(AB) = \theta(\overline{A}\;\overline{B})$ and if $AB \neq 0$ in $T$ we have $\theta(AB) = \theta(\overline{AB}) = \theta(\overline{A}\;\overline{B})$. In either case $\theta(A)\theta(B) = \theta(AB)$.
\end{enumerate}
Hence $\theta$ is a homomorphism as required and $\sim$ is clearly its kernel.
\end{proof}

\begin{theorem}\label{clifford-extension-semilattice-theorem}
Every strict extension $\Sigma$ of a Clifford semigroup $S$ by a semigroup $T$ is a semilattice of extensions of groups. Conversely, if $\Sigma$ is a semilattice of extensions $\Sigma_\alpha$ of groups $G_\alpha$ and $S = \bigcup_{\alpha\in Y}G_\alpha$ is an ideal of $\Sigma$ then $\Sigma$ is a strict extension of the Clifford semigroup $S$.
\end{theorem}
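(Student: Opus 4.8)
The forward direction is essentially already in hand. Since $\Sigma$ is a strict extension of the Clifford semigroup $S=\mathcal{S}[Y;G_\alpha]$, by the preceding lemma it is determined by a partial homomorphism $A\mapsto\overline{A}$, and Lemma~\ref{Clifford-extension-lemma} supplies a congruence $\sim$ whose classes are the sets $\Sigma_\alpha=G_\alpha\cup\{A\in T\setminus\{0\}\mid\overline{A}\in G_\alpha\}$ with $\Sigma/\!\!\sim\;\cong Y$. Thus $\Sigma$ is already a semilattice $Y$ of the subsemigroups $\Sigma_\alpha$, and I would only need to check that each $\Sigma_\alpha$ is an ideal extension of the group $G_\alpha$. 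For this I would verify that $G_\alpha$ is an ideal of $\Sigma_\alpha$: for $g\in G_\alpha$ and $A\in\Sigma_\alpha\setminus G_\alpha$ the multiplication rules of a strict extension give $A\ast g=\overline{A}g$ and $g\ast A=g\overline{A}$, both lying in $G_\alpha$ because $\overline{A}\in G_\alpha$; the case $A\in G_\alpha$ is immediate. Hence $\Sigma_\alpha$ is an ideal extension of $G_\alpha$, and the forward direction follows.

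For the converse, suppose $\Sigma=[Y;\Sigma_\alpha]$ is a semilattice of ideal extensions $\Sigma_\alpha$ of groups $G_\alpha$ with $S=\bigcup_{\alpha\in Y}G_\alpha$ an ideal of $\Sigma$. I would first identify $S$ as a Clifford semigroup. Being an ideal, $S$ is a subsemigroup, and since the $\Sigma_\alpha$ partition $\Sigma$ we have $S\cap\Sigma_\gamma=G_\gamma$; thus for $g\in G_\alpha$ and $h\in G_\beta$ the product $gh\in\Sigma_{\alpha\beta}\cap S=G_{\alpha\beta}$, exhibiting $S$ as a semilattice of the groups $G_\alpha$, i.e.\ a Clifford semigroup $\mathcal{S}[Y;G_\alpha]$.

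It then remains to show the extension is strict, i.e.\ that every $A\in\Sigma\setminus S$ acts on $S$ as some fixed element of $S$. Writing $e_\gamma$ for the identity of $G_\gamma$ and taking $A\in\Sigma_\alpha\setminus G_\alpha$, the candidate is $\overline{A}:=Ae_\alpha$. The key step I would establish first is the \emph{sandwich identity} $e_\gamma A=Ae_\gamma$ whenever $\gamma\le\alpha$: since $\alpha\gamma=\gamma\alpha=\gamma$ and $S$ is an ideal, both $Ae_\gamma$ and $e_\gamma A$ lie in $G_\gamma$, so the group law forces $e_\gamma(Ae_\gamma)=(e_\gamma A)e_\gamma$ to simplify to $Ae_\gamma=e_\gamma A$. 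In particular $\overline{A}=Ae_\alpha=e_\alpha A\in G_\alpha\subseteq S$. For arbitrary $x\in G_\beta$ I would set $\gamma=\alpha\beta$, note $Ax\in G_\gamma$, and use $e_\alpha x=e_\alpha e_\beta x=e_\gamma x$ in the Clifford semigroup together with the sandwich identity to get $\overline{A}x=Ae_\alpha x=Ae_\gamma x=e_\gamma Ax=e_\gamma(Ax)=Ax$, with the dual computation giving $x\overline{A}=xA$.

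The forward direction and the identification of $S$ as a Clifford semigroup are routine; the crux is the strictness argument, where the real obstacle is producing a \emph{single} element of $S$ that realises the action of $A$ simultaneously on every $G_\beta$, not merely on its home group $G_\alpha$. The sandwich identity $e_\gamma A=Ae_\gamma$ is exactly what overcomes this, reducing the verification to the commuting-idempotent relation $e_\alpha x=e_{\alpha\beta}x$ available in any Clifford semigroup.
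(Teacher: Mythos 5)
Your proof is correct, and the forward direction coincides with the paper's (Lemma~\ref{Clifford-extension-lemma} plus the observation that the strict multiplication rules make $G_\alpha$ an ideal of $\Sigma_\alpha$). For the converse, however, you take a genuinely more direct route. The paper first invokes Proposition~\ref{grillet-petrich-proposition} and Theorem~\ref{grillet-petrich-theorem} to conclude that each local extension $\Sigma_\alpha$ of the group $G_\alpha$ is strict and determined by the partial homomorphism $A \mapsto Ae_\alpha = e_\alpha A$, then glues these into a global map $\varphi\colon T\setminus\{0\}\to S$, verifies that $\varphi$ is a partial homomorphism, and finally checks case by case that the multiplication $\circ$ induced by $\varphi$ agrees with the original multiplication on $\Sigma$, whence $\Sigma$ is strict because extensions defined by partial homomorphisms are strict. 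You instead verify the definition of strictness directly: the same candidate $\overline{A}=Ae_\alpha$, but with the commuting identity $Ae_\gamma = e_\gamma A$ (for $\gamma\le\alpha$) derived from scratch by your associativity ``sandwich'' argument $e_\gamma(Ae_\gamma)=(e_\gamma A)e_\gamma$ together with the fact that both sides lie in $G_\gamma$ since $S$ is an ideal. This bypasses the Grillet--Petrich machinery entirely and is self-contained, which is a genuine economy; note though that the underlying computations are the same in both proofs (centrality of idempotents in the Clifford semigroup $S$, $e_\alpha e_\beta = e_{\alpha\beta}$, and $Ax\in G_{\alpha\beta}$ so that $e_{\alpha\beta}$ fixes it). What the paper's longer route buys is the explicit partial homomorphism $\varphi$ realising $\Sigma$, which is the object the rest of the paper's framework (and the example at the end of Section~4) actually works with; your argument establishes strictness but does not exhibit the defining partial homomorphism, though it could be read off afterwards via the paper's unnumbered lemma preceding Lemma~\ref{Clifford-extension-lemma}.
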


\begin{proof}
By Lemma~\ref{Clifford-extension-lemma}, $\Sigma$ is a semilattice of semigroups $\Sigma_\alpha$ defined via a partial homomorphism $A\mapsto {\overline{A}}$ from $T\setminus\{0\}\to S$. The restriction of this map to $\Sigma_\alpha \setminus G_\alpha$ gives a partial homomorphism defining the ideal extension $\Sigma_\alpha$ of the group $G_\alpha$.

\smallskip

Conversely, let $\Sigma$ be a semilattice of semigroups $\Sigma_\alpha$ where each $\Sigma_\alpha$ is an ideal extension of a group $G_\alpha$ by a stratified semigroup $T_\alpha$ and $S = \bigcup_{\alpha \in Y} G_\alpha$ is an ideal of $\Sigma$. It follows that $S$ is a Clifford semigroup and $\Sigma$ is an ideal extension of $S$ by $T = \Sigma/S$, where $T$ can equivalently be viewed as $\{0\} \cup \bigcup_{\alpha \in Y} T_\alpha\setminus\{0\}$. As $G_\alpha$ has identity $e_\alpha$ the extension $\Sigma_\alpha$ is determined by the partial homomorphism $A \mapsto A e_\alpha \; (= e_\alpha A)$ (Proposition~\ref{grillet-petrich-proposition} and Theorem~\ref{grillet-petrich-theorem}). The union of these maps is then a map $\varphi: T\setminus\{0\} \rightarrow S$ such that $\varphi(A) = Ae_\alpha$ for each $A \in T_\alpha\setminus\{0\}$. We will show that $\varphi$ is a partial homomorphism and that it defines the ideal extension $\Sigma$. For clarity, the multiplication determined by $\varphi$ will be denoted by $\circ$, multiplication within $T$ by $\ast $, and the original multiplication of the semilattice $\Sigma$ by juxtaposition.

Let $A, B \in T\setminus\{0\}$ such that $A\ast B \neq 0$ and assume $A \in T_\alpha$, $B \in T_\beta$ so that $A\ast B \in T_{\alpha\beta}$. Then $\varphi(A)\varphi(B) = Ae_\alpha(Be_\beta) = A(Be_\beta) e_\alpha = ABe_{\alpha\beta} = \varphi(AB)$ as required.

This partial homomorphism determines an ideal extension of $S$ consisting of the same set $\Sigma$ under the multiplication $\circ$ defined by
\begin{enumerate}
\item $s \circ t = st$
\item $A \circ B = \begin{cases}
AB & \text{if }A\ast B \neq 0\\
\varphi(A)\;\varphi(B) & \text{otherwise}
\end{cases}$
\item $A \circ s = \varphi(A)s$
\item $s \circ A = s\varphi(A)$
\end{enumerate}
where $A,B \in T\setminus\{0\}$ and $s,t \in S$. We show that in all cases, this multiplication is equivalent to the original multiplication on $\Sigma$. The first condition and the first part of the second condition do not require proof. For the second part of the second condition, let $A \in T_\alpha\setminus\{0\}$ and $B \in T_\beta\setminus\{0\}$ with $A\ast B = 0$ so $AB \in G_{\alpha\beta}$. Then $$A \circ B = \varphi(A)\varphi(B) = Ae_\alpha (Be_\beta) = A(Be_\beta) e_\alpha = ABe_{\alpha\beta} = AB$$ as required.
For the third condition, let $A \in T_\alpha\setminus\{0\}$ and $s \in G_\beta$ with $As \in G_{\alpha\beta}$. Then $$A \circ s = \varphi(A)s = Ae_\alpha (s e_\beta) = A(se_\beta) e_\alpha = As e_{\alpha\beta} = As$$ as required. The fourth condition follows a dual argument. Hence $\varphi$ determines the extension $\Sigma$ and so it is a strict extension of $S$.
\end{proof}

\begin{corollary}\label{clifford-extension-semilattice-corollary}
Let $\Sigma$ be a strict stratified extension of a Clifford semigroup $S$. Then $\Sigma$ is a semilattice of stratified extensions of groups.
\end{corollary}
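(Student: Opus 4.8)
The plan is to obtain the semilattice decomposition straight from Theorem~\ref{clifford-extension-semilattice-theorem} and then to pin down the base of each component. Writing $S = {\cal S}[Y;G_\alpha]$ and $T = \Sigma/S$, Theorem~\ref{clifford-extension-semilattice-theorem} already tells us that $\Sigma = \bigcup_{\alpha\in Y}\Sigma_\alpha$ is a semilattice of ideal extensions $\Sigma_\alpha$ of the groups $G_\alpha$, where $\Sigma_\alpha = G_\alpha \cup \{A \in T\setminus\{0\} : \overline{A}\in G_\alpha\}$. So the only thing left to prove is that each $\Sigma_\alpha$ is a \emph{stratified} extension of $G_\alpha$, i.e. that $\Base(\Sigma_\alpha) = G_\alpha$. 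Here I read the hypothesis ``strict stratified extension of $S$'' as saying, in addition to strictness, that $\Base(\Sigma) = S$; this is the crucial extra information, since a strict extension of a Clifford semigroup that merely happens to have nonempty base need not have base equal to $S$ (for instance a nontrivial monoid extending the trivial group is its own base).

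For the inclusion $G_\alpha \subseteq \Base(\Sigma_\alpha)$ I would argue that $G_\alpha$ is a group, so every element of $G_\alpha$ is regular in $\Sigma_\alpha$; since $\Reg(U)\subseteq\Base(U)$ for any semigroup $U$, we get $G_\alpha \subseteq \Reg(\Sigma_\alpha)\subseteq \Base(\Sigma_\alpha)$, and in particular $\Base(\Sigma_\alpha)\ne\emptyset$. For the reverse inclusion I would use that $\Sigma_\alpha$ is a subsemigroup of $\Sigma$, so $\Sigma_\alpha^{\,m}\subseteq \Sigma^m$ for every $m$ and therefore
$$
\Base(\Sigma_\alpha) = \bigcap_{m>0}\Sigma_\alpha^{\,m} \subseteq \bigcap_{m>0}\Sigma^m = \Base(\Sigma) = S.
$$
Since also $\Base(\Sigma_\alpha)\subseteq \Sigma_\alpha$, and since the only elements of $S$ lying in $\Sigma_\alpha$ are those of $G_\alpha$ (the remaining elements of $\Sigma_\alpha$ lie in $T\setminus\{0\}\subseteq \Sigma\setminus S$), we conclude $\Base(\Sigma_\alpha)\subseteq \Sigma_\alpha \cap S = G_\alpha$. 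Combining the two inclusions gives $\Base(\Sigma_\alpha) = G_\alpha$, so each $\Sigma_\alpha$ is a stratified extension of the group $G_\alpha$ and $\Sigma$ is a semilattice of stratified extensions of groups.

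The argument is almost entirely a matter of tracking the base through the decomposition, so I expect no serious obstacle once Theorem~\ref{clifford-extension-semilattice-theorem} is in hand. The one point that genuinely needs care is the use of $\Base(\Sigma)=S$: the containment $\Base(\Sigma_\alpha)\subseteq\Base(\Sigma)$ is elementary, but it is only useful because the global base has been pinned down to be exactly $S$. If one assumed merely that $\Sigma$ has nonempty base, the conclusion would fail, so the main conceptual step is recognising that the stratified hypothesis is to be applied globally to $\Sigma$ and then transported to each component through the inclusion $\Sigma_\alpha\subseteq\Sigma$.
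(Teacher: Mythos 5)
Your proof is correct, and it rests on the same pillar as the paper's: Theorem~\ref{clifford-extension-semilattice-theorem} (via Lemma~\ref{Clifford-extension-lemma}) supplies the semilattice decomposition $\Sigma=\bigcup_{\alpha\in Y}\Sigma_\alpha$ into ideal extensions of the groups $G_\alpha$, and all that remains is to check that each component is stratified. Where you genuinely diverge is in that final check, and in how you read the hypothesis. The paper reads ``strict stratified extension of $S$'' as ``strict extension of $S$ by a stratified semigroup $T=\Sigma/S$'' (stratified in Grillet's sense) and finishes downstairs, in the quotient: each $\Sigma_\alpha$ is an ideal extension of $G_\alpha$ by a subsemigroup of $T$ containing zero, and such a subsemigroup is again stratified because its powers are contained in those of $T$. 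You read the hypothesis as $\Base(\Sigma)=S$ and finish upstairs, sandwiching $G_\alpha\subseteq\Reg(\Sigma_\alpha)\subseteq\Base(\Sigma_\alpha)\subseteq\Base(\Sigma)\cap\Sigma_\alpha=S\cap\Sigma_\alpha=G_\alpha$. The two readings are equivalent, but the equivalence is precisely the regularity of $S$: on one hand $S\subseteq\Reg(\Sigma)\subseteq\Base(\Sigma)$ holds for any extension of a Clifford semigroup, and on the other hand $(\Sigma/S)^m$ is the image of $\Sigma^m$, so $\Sigma/S$ is stratified exactly when $\Base(\Sigma)\subseteq S$; neither your write-up nor the paper's spells this out, though it is a two-line argument. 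Your version buys one small thing the paper leaves implicit: it shows that $\Base(\Sigma_\alpha)$ is literally $G_\alpha$, which is what ``stratified extension of a group'' means in the paper's own terminology, whereas the paper stops at ``ideal extension of a group by a stratified semigroup'' --- and, as the example following Proposition~\ref{stratified-names-lemma} shows, an ideal extension by a stratified semigroup need not in general have base equal to the ideal, so the same regularity remark is needed to close that gap too.
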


\begin{proof}
Let $\Sigma$ be a strict extension of a Clifford semigroup $S$ by a stratified semigroup $T$. By Theorem~\ref{clifford-extension-semilattice-theorem}, $\Sigma$ is a semilattice of semigroups $\Sigma_\alpha$, each of which is an ideal extension of a group $G_\alpha$ by a subsemigroup of $T$ containing zero. It can be easily verified that such a subsemigroup is also stratified, and hence $\Sigma$ is a semilattice of stratified extensions of groups.
\end{proof}

The converse of Corollary~\ref{clifford-extension-semilattice-corollary} does not hold in general as each $T_\alpha$ being a stratified semigroup does not guarantee that $T$ is itself a stratified semigroup. For example, let $Y = \{a,b\}$ with $a \leq b$. For each $\alpha \in Y$ let $G_\alpha$ be a group, $T_\alpha$ a free semigroup with adjoined zero, and $\Sigma_\alpha$ an ideal extension of $G_\alpha$ by $T_\alpha$. For $s \in T_a$ and $t \in T_b$ let $st=ts = s$. Along with the fact that $S = G_a \cup G_b$ is an ideal of $\Sigma$, this defines a multiplication on the semilattice $\Sigma = \Sigma_a \cup \Sigma_b$. Each $T_\alpha$ is a stratified semigroup so each $\Sigma_\alpha$ is a stratified extension of a group, however $T = \Sigma / S$ is not stratified, as $\bigcap_{i \geq 1} T^i \cong T_a$. A sufficient, but clearly not necessary, condition under which $T$ will always be stratified is if $T$ is finite.

\bigskip

\def\dom{\hbox{dom}}
As an example of the above construction, consider the following. Let $n \in \n$ and let $N = \{1,\dots,n\}$. Let $S=G_1^0\times\ldots\times G_n^0$ be a direct product of $0-$groups $G^0_i$, $i\in N$. For $s = (a_1,\ldots,a_n)\in S$ define $\dom(s) = \{i\in N|a_i\ne0\}$.

\smallskip

Let $m\in\n$ and define a relation $\rho_m$ on $(\n, +)$ by
$$
\rho_m = 1_{\n}\cup \{(x,y)\in\n\times\n|x,y\ge m\}.
$$
Then it is easy to check that $S$ is a Clifford semigroup (and hence a strong semilattice of groups), $\rho_m$ is a congruence on $\n$ and $\n/\rho_m$ is a finite monogenic semigroup with trivial kernel. For simplicity, we shall identify $\n/\rho_m$ with $\{1,\ldots,m\}$, in the obvious way. Let $T'$ be the semigroup of all partial maps from $N$ to $\n/\rho_m$ with binary operation $\ast$ given by $(f\ast g)(x) = f(x)+g(x)$ when both are defined and undefined otherwise. Let $I \subseteq T'$ be the set of maps whose image is $\{m\}$. It can be readily seen that $I$ is an ideal of $T'$ and $T = T'/I$ is a nilsemigroup.

\smallskip

For each $i \in N$ pick an element $g_i \in G_i$ and let $\alpha_i: T\setminus\{0\} \rightarrow G^0_i$ be the partial homomorphism given by $$\alpha_i(f) = 
\begin{cases}
g_i^{f(i)} & f(i) \text{ is defined} \\
0 & \text{otherwise.}
\end{cases}$$ Then $\alpha: T\setminus\{0\} \rightarrow S$ given by $\alpha(f) = (\alpha_1(f),\dots,\alpha_n(f))$ is a partial homomorphism defining an ideal extension $\Sigma$ of $S$ by $T$.

\smallskip
Notice that $s{\cal J}t$ if and only if $\dom(s)=\dom(t)$.
It follows that the semilattice structure of $S$ is defined in terms of the power set of $N$ (i.e. $\dom(st) = \dom(s)\cap\dom(t)$). Let $S_M$ be the $\cal J$-class of $S$ with $\dom(s)=M$ for $s\in S_M$. Then $T_M = \alpha^{-1}(S_M)$ is the set of maps in $T\setminus\{0\}$ whose domain is exactly $M$. The set $T_M^0=T_M\cup\{0\}$ is a subsemigroup of $T$ and is a nilsemigroup. The restriction of $\alpha$ to $T_M$ then gives a partial homomorphism from $T_M^0$ to $S_M$ which defines an ideal extension $\Sigma_M$ of the group $S_M$ by $T_M^0$. It can then be shown that $\Sigma$ is a semilattice of these semigroups $\Sigma_M$.

\bigskip

The authors thank the anonymous referee for pointing out the example after Proposition~\ref{group-bound-proposition} and for a number of useful comments which have enhanced the overall exposition of the paper.


\begin{thebibliography}{00}
\bibitem{clifford-preston-61} A. H. Clifford and G. B. Preston, {\it The algebraic theory of semigroups, Vol. 1}, Mathematical Surveys of the American Mathematical Society, no. 7. Providence, R.I. (1961).

\bibitem{grillet-95} Pierre Antoine Grillet, {\em Stratified Semigroups}, Semigroup Forum, Vol. 50 (1995) 25--36.

\bibitem{grillet-petrich-68} P. A. Grillet and Mario Petrich, {\em Ideal extensions of semigroups}, Pacific Journal of Mathematics,
Vol. 26, No. 3, 1968.

\bibitem{howie-95} J.M. Howie, {\it Fundamentals of Semigroup Theory}, London Mathematical Society Monographs, (OUP, 1995).

\bibitem{shevrin-95} L. N. Shevrin, {\em On the Theory of Epigroups I}, Russian Academy of Sciences Sbornik Mathematics, Vol. 82 (1995) 485--512.
\end{thebibliography}
\end{document}